\def\draft{n}
\documentclass{amsart}
\usepackage[headings]{fullpage}
\usepackage{amssymb,epic,eepic,epsfig}
\usepackage{pb-diagram,lamsarrow,pb-lams}


\theoremstyle{plain}

\newtheorem{theorem}{Theorem}
\newtheorem{proposition}{Proposition}[section]
\newtheorem{lemma}[proposition]{Lemma}
\newtheorem{corollary}[proposition]{Corollary}

\newtheorem{conjecture}{Conjecture}

\theoremstyle{definition}
\newtheorem{definition}[proposition]{Definition}
\newtheorem{problem}[proposition]{Problem}

\theoremstyle{remark}
\newtheorem{example}[proposition]{Example}

\newtheorem{remark}[proposition]{Remark}

\def\printname#1{
        \if\draft y
                \smash{\makebox[0pt]{\hspace{-0.5in}
                        \raisebox{8pt}{\tt\tiny #1}}}
        \fi
}

\newlength{\standardunitlength}
\setlength{\standardunitlength}{0.0125in}

\catcode`\@=11
\long\def\@makecaption#1#2{%
     \vskip 10pt

\setbox\@tempboxa\hbox{
       \small\sf{\bfcaptionfont #1. }\ignorespaces #2}%
     \ifdim \wd\@tempboxa >\captionwidth {%
         \rightskip=\@captionmargin\leftskip=\@captionmargin
         \unhbox\@tempboxa\par}%
       \else
         \hbox to\hsize{\hfil\box\@tempboxa\hfil}%
     \fi}
\font\bfcaptionfont=cmssbx10 scaled \magstephalf
\newdimen\@captionmargin\@captionmargin=2\parindent
\newdimen\captionwidth\captionwidth=\hsize
\catcode`\@=12

\def\lbl#1{\label{#1}\printname{#1}}


\def\BN{\mathbb N}
\def\BZ{\mathbb Z}

\def\BQ{\mathbb Q}
\def\BR{\mathbb R}
\def\BC{\mathbb C}

\def\calP{\mathcal P}

\def\a{\alpha}

\def\e{\epsilon}

\def\b{\beta}

\def\sub{\subset}

\def\longto{\longrightarrow}

\def\pt{\partial}

\def\calB{\mathcal{B}}

\def\Log{\mathrm{Log}}

\def\Li{\mathrm{Li}}
\def\calB{\mathcal{B}}

\def\Lp{L^{\mathrm{p}}}
\def\Lnp{L^{\mathrm{np}}}

\def\ft{\mathfrak{t}}

\def\Kind{K^{\mathrm{ind}}}
\def\CH{\mathrm{CH}}
\def\FT{\mathrm{FT}}
\def\Log{\mathrm{Log}}
\def\Spec{\mathrm{Spec}}

\def\sub{\subset}
\def\qbinom#1#2{\binom{#1}{#2}_q}
\def\hatCP{\widehat{X_{\ft}}}
\def\CV{\mathrm{CV}}
\def\hatBC{\widehat{\calB(\BC)}}
\def\qterm{$q$-term}
\def\sqterm{special $q$-term}
\def\Sqterm{Special $q$-term}
\def\calD{\mathcal{D}}

\def\BCs{\mathbb{\BC}^{**}}
\def\hatFT{\widehat{\mathrm{FT}}}

\def\Ker{\mathrm{Ker}}
\def\QL{\mathrm{QL}}

\begin{document}


\title[$q$-terms, singularities and the extended Bloch group]{
$q$-terms, singularities and the extended Bloch group}
\author{Stavros Garoufalidis}
\address{School of Mathematics \\
         Georgia Institute of Technology \\
         Atlanta, GA 30332-0160, USA \\ 
         {\tt http://www.math.gatech} \newline {\tt .edu/$\sim$stavros } }
\email{stavros@math.gatech.edu}

\thanks{The author was supported in part by NSF. \\
\newline
1991 {\em Mathematics Classification.} Primary 57N10. Secondary 57M25.
\newline
{\em Key words and phrases: Bloch group, extended Bloch group, algebraic
$K$-theory, regulators, dilogarithm, Rogers dilogarithm, Bloch-Wigner
dilogarithm, potential, Bethe ansatz, special $q$-hypergeometric
terms, asymptotic expansions, knots, homology spheres, Habiro ring,
Volume Conjecture, periods, Quantum Topology, resurgence, Gevrey series.
}
}

\date{August 31, 2010}

\dedicatory{Dedicated to S. Bloch on the occasion of his sixtieth birthday.
}

\begin{abstract}
Our paper originated from a generalization of the Volume Conjecture to
multisums of $q$-hypergeometric terms. This generalization was 
sketched by Kontsevich in a problem list in Aarhus University 
in 2006; \cite{Ko}. We introduce the notion of a 
$q$-hypergeometric term (in short, $q$-term). The latter 
is a product of ratios of $q$-factorials in linear forms in several variables.

In the first part of the paper, we show how to construct elements of the 
Bloch group (and its extended version) given a \qterm. Their image
under the Bloch-Wigner map or the Rogers dilogarithm is a finite set
of periods of weight $2$, in the sense of Kontsevich-Zagier.

In the second part of the paper we introduce the notion of a special 
$q$-term, its corresponding sequence of polynomials, and its generating
series. Examples of special $q$-terms come naturally from Quantum Topology,
and in particular from planar projections of knots. 

The two parts are tied together by a conjecture that relates the singularities
of the generating series of a special $q$-term with the periods of the
corresponding elements of the extended Bloch group. In some cases (such as
the $4_1$ knot), the conjecture is known.
\end{abstract}

\maketitle

\tableofcontents

\section{Introduction}
\lbl{sec.intro}

\subsection{A brief summary of our results}
\lbl{sub.brief}

Our paper originated from a generalization of the Volume Conjecture to
multisums of $q$-hypergeometric terms. This generalization was 
sketched by Kontsevich in a problem list in Aarhus University 
in 2006; \cite{Ko}. Our paper expands Kontsevich's
problem, and reveals a close and precise relation between 
special $q$-hypergeometric terms (defined below), elements of the extended 
Bloch group and its conjectural relation to singularities of generating
series. Oddly enough, setting $q=1$ also provides a relation between
special hypergeometric terms and elements of the {\em extended additive} 
Bloch group. This is explained in a separate publication; see \cite{Ga1}
and \cite{Ga2}.

Our paper consists of two, rather disjoint parts, that are tied together 
by a conjecture. 

In the first part, we introduce the notion of a \qterm , and assign to it
elements of the Bloch group and its extended version. The image of
these elements of the extended Bloch group under the Rogers dilogarithm
is a finite set of periods in the sense of Kontsevich-Zagier.

In the second part of the paper we introduce the notion of \sqterm , 
its corresponding sequence of polynomials, and its generating
series. Examples of \sqterm s come naturally from Quantum Topology,
and in particular from planar projections of knots. 

Finally, we formulate a conjecture that relates the singularities
of the generating series of a special $q$-term with the periods of the
corresponding elements of the extended Bloch group. In some cases (such as
the $4_1$ knot, the conjecture is known, and implies the Volume Conjecture
to all orders, with exponentially small terms included.

\subsection{What is a \qterm?}
\lbl{sub.goal}

The next definition, taken from \cite{WZ}, plays a key role in our paper.

\begin{definition}
\lbl{def.qterm}
A {\em \qterm } $\ft$ in the $r+1$ variables 
$k=(k_0,k_1,\dots,k_r) \in \BN^{r+1}$ is a list that consists of
\begin{itemize}
\item
An integral symmetric quadratic form $Q(k)$ in $k$,
\item
An integral linear form $L$ in $k$ and a vector $\e=(\e_0,\dots,\e_r)$
with $\e_i=\pm 1$ for all $i$,
\item
An integral linear form $A_j$ in $k$ for $j=1,\dots,J$
\end{itemize}
\end{definition}
A \qterm\ $\ft$ gives rise to an expression of the form
\begin{equation}
\lbl{eq.qterm}
\ft_{k}(q)=q^{Q(k)} \e^{L(k)} \prod_{j=1}^{J}  (q)^{\e_j}_{A_j(k)} \in \BQ(q)
\end{equation}
valid for $k \in \BN^{r+1}$ such that $A_j(k) \geq 0$ for all $j=1,\dots,J$.
Here, the standard $q$-{\em factorial} of a natural number $n$
and the $q$-{\em binomial} for $0 \leq m \leq n$
defined by (see for example \cite{St}):

\begin{equation}
\lbl{eq.qbinomial}
(q)_n=\prod_{j=1}^n (1-q^j), \qquad 
\qbinom{n}{m}=\frac{(q)_n}{(q)_m (q)_{n-m}}
\end{equation}
\qterm s can be easily constructed and may be combinatorially 
encoded by matrices of the coefficients of the forms $A_j,L,Q$
much in the spirit of Neumann-Zagier and Nahm; see \cite{NZ, Ko,Na}.

\subsection{The Bloch group}
\lbl{sub.bloch}

Let us recall the symbolic-definition of the Bloch group from 
\cite{B1}; see also \cite{DS,Ne2,Su1,Su2}.
Below $F$ denotes a field of characteristic zero. 

\begin{definition}
\lbl{def.bloch}
\rm{(a)}
The {\em pre-Bloch group} 
$\calP(F)$ is the quotient of the freeabelian group 
generated by symbols $[z]$, $z \in F\setminus\{0,1\}$, 
subject to the {\em 5-term relation}:
\begin{equation}
\lbl{eq.5term}
[x]-[y]+\left[\frac{y}{x}\right]-\left[\frac{1-x^{-1}}{1-y^{-1}}\right]+
\left[\frac{1-x}{1-y}\right] =0.
\end{equation}
\rm{(b)} 
The {\em Bloch group} $\calB(F)$ is the kernel of the 
homomorphism
\begin{equation}
\lbl{eq.nu}
\nu: \calP(F) \longto F^* \wedge F^*, \qquad
[z] \mapsto z \wedge (1-z)
\end{equation}
to the second exterior power of the abelian group $F^*$ defined by
mapping a generator $[z]$ to $z \wedge(1-z)$. The second exterior power
$ G \wedge G$ of an abelian group $G$ is defined by:
\begin{equation}
\lbl{eq.lambda2}
G \wedge G= G \otimes_{\BZ} G/(a \otimes b + b \otimes a)
\end{equation}
\end{definition}

Recall the {\em Bloch-Wigner function} (see for example, \cite[Eqn.39]{NZ})

\begin{equation}
\lbl{eq.bw}
\calD_2: \BC \longto i \BR, \qquad
z \mapsto \calD_2(z):=i \Im(\Li_2(z)+\log(1-z) \log|z|),
\end{equation}
which is continuous on $\BC$ and analytic on $\BC\setminus\{0,1\}$.
Since $\calD_2$ satisfies the 5-term relation, we can define a 
map on the pre-Bloch group of the complex numbers:

\begin{equation}
\lbl{eq.rp}
R: \calP(\BC) \longto i \BR, \qquad z \mapsto \calD_2([z])
\end{equation}
and its restriction to the Bloch group (denoted by the same notation):

\begin{equation}
\lbl{eq.r}
R: \calB(\BC) \longto i \BR, \qquad z \mapsto \calD_2([z])
\end{equation}

\subsection{From \qterm s to the Bloch group}
\lbl{sub.special2bloch}

In order to state our results, let us introduce some useful notation. 
Given a linear form $A$ in $r+1$ variables $k=(k_0,k_1,\dots,k_r)$ 
let us define

\begin{equation}
\lbl{eq.vi}
v_i(A)=a_i,  \qquad 
\text{where} \qquad A(k)=\sum_{i=0}^r a_i k_i.
\end{equation}
If $z=(z_0,\dots,z_r) \in \BC^*$ and $A$ is an integral linear form $A$ 
in $k=(k_0,k_1,\dots,k_r)$, let us abbreviate

\begin{equation}
\lbl{eq.zA}
z^A=\prod_{i=0}^r z_i^{v_i(A)}.
\end{equation}
If $Q$ is a symmetric quadratic form in $k=(k_0,k_1,\dots,k_r)$, we can
write it in the form:
$$
Q(k)=\frac{1}{2} \sum_{i,j=0}^r Q_{ij} k_i k_j + \sum_{i=0}^r \QL_i k_i.
$$
$Q$ gives rise to the linear forms $Q_i$ (for $i=0,\dots,r$) in $k$ 
defined by
$$
Q_i(k)=\sum_{j=0}^r Q_{ij} k_j.
$$

The next definition associates to a \qterm\ $\ft$ the complex points
of an affine scheme defined over $\BQ$.

\begin{definition}
\lbl{def.var}
Given a \qterm\ $\ft$, let
$X_{\ft}$ denote the set of points $z=(z_0,\dots,z_r) \in (\BCs)^{r+1}$ that
satisfy the following system of {\em Variational
Equations}:
\begin{equation}
\lbl{eq.var}
z^{Q_i} \e^{v_i(L)} 
\prod_{j=1}^{J} (1-z^{A_j})^{v_i(\e_j A_j)}=1
\end{equation}
for $i=0,\dots,r$. Here, $\BCs=\BC\setminus\{0,1\}$.
\end{definition}
Generically, $X_{\ft}$ is a zero-dimensional set. In that case, 
$X_{\ft} \subset (\overline{\BQ}^*)^{r+1}$, where $\overline{\BQ}$ is the
set of algebraic numbers. The next definition assigns elements of 
the pre-Bloch group to a \qterm\ $\ft$.

\begin{definition}
\lbl{def.ebeta}
Given a \qterm\ $\ft$, consider the map:
\begin{equation}
\lbl{eq.ebeta}
\b_{\ft}: X_{\ft} \longto \calP(\BC)
\end{equation}
given by:
\begin{equation}
\lbl{eq.ebetab}
z \mapsto \b_{\ft}(z):=
\sum_{j=1}^{J} \e_j [z^{A_j}].
\end{equation}
\end{definition}

The next theorem, communicated to us by Kontsevich, assigns elements of the 
Bloch group to a \qterm\ $\ft$. 
In \cite{Za2}, Zagier attributes the result to Nahm \cite{Na}.

\begin{theorem}
\lbl{thm.0}
\rm{(a)} The map $\b_{\ft}$ descends to a map 
\begin{equation}
\lbl{eq.ebeta2}
\b_{\ft}: X_{\ft} \longto \calB(\BC)
\end{equation}
which we denote by the same name.
\newline
\rm{(b)} The image of 
$$
R \circ \b_{\ft}: X_{\ft} \longto i \BR
$$
is a finite subset of $i \BR \cap \calP$, where $\calP$ is the set of periods
in the sense of Kontsevich-Zagier; \cite{KZ}.
\end{theorem}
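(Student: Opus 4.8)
The plan is to prove (a) first and then bootstrap it into (b), since the whole of (b) turns out to be powered by the algebraic identity behind (a), read as a statement about differential forms. For part (a), I would compute $\nu(\b_{\ft}(z))$ directly. Since $\nu([w])=w\wedge(1-w)$ and $\BC^*$ is written multiplicatively (so products become sums inside the wedge), expanding $z^{A_j}=\prod_i z_i^{v_i(A_j)}$ and collecting the $z_i$ gives
\[
\nu(\b_{\ft}(z))=\sum_{j=1}^J\e_j\,z^{A_j}\wedge(1-z^{A_j})=\sum_{i=0}^r z_i\wedge\prod_{j=1}^J(1-z^{A_j})^{v_i(\e_j A_j)}.
\]
The Variational Equations of Definition~\ref{def.var} say exactly that the inner product is $z^{-Q_i}\e^{-v_i(L)}$, whence
\[
\nu(\b_{\ft}(z))=-\sum_{i,l}Q_{il}\,z_i\wedge z_l\;+\;\sum_{i=0}^r z_i\wedge\e^{-v_i(L)}.
\]
The first sum vanishes because $Q_{il}$ is symmetric while $z_i\wedge z_l$ is antisymmetric (the diagonal $z_i\wedge z_i$ being $2$-torsion), and the second sum is a wedge against the root of unity $\e^{-v_i(L)}=\pm1$, hence $2$-torsion as well. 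Because $\BC^*\cong(\BQ/\BZ)\oplus V$ with $V$ a $\BQ$-vector space, the group $\BC^*\wedge\BC^*$ is torsion-free, so these contributions are genuinely $0$ and $\b_{\ft}(z)\in\calB(\BC)$.

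For the finiteness in part (b) I would package the computation above into a rigidity statement. Define the homomorphism $\mathfrak r$ from $\BC^*\wedge\BC^*$ (and, more usefully, from the second exterior power of the group of invertible functions on $X_{\ft}$) to $1$-forms by
\[
\mathfrak r(a\wedge b)=i\bigl(\log|a|\,d\arg b-\log|b|\,d\arg a\bigr);
\]
this is well defined since $\log|\cdot|$ and $d\arg$ are homomorphisms and the expression is alternating. A direct check gives $d\calD_2(w)=\mathfrak r\bigl(w\wedge(1-w)\bigr)$, so
\[
d(R\circ\b_{\ft})=\sum_{j}\e_j\,d\calD_2(z^{A_j})=\mathfrak r\Bigl(\textstyle\sum_j\e_j\,z^{A_j}\wedge(1-z^{A_j})\Bigr).
\]
Running the reduction of part (a) at the level of invertible functions on $X_{\ft}$ and then applying $\mathfrak r$, the symmetry of $Q$ kills the quadratic terms and the locally constant, modulus-one factors $\e^{-v_i(L)}$ kill the rest, so $d(R\circ\b_{\ft})=0$ on $X_{\ft}$. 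Thus $R\circ\b_{\ft}$ is locally constant on the smooth locus; as $X_{\ft}$ is a complex algebraic variety with finitely many irreducible components, each having connected smooth locus, the image is finite.

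Finally, for the period claim, membership in $i\BR$ is built into $\calD_2$. I would use that each irreducible component of $X_{\ft}$ is defined over $\overline\BQ$ and contains $\overline\BQ$-points, which are Zariski dense and hence meet the smooth locus; by the local constancy just proved the value on the whole component equals its value at such an algebraic $z$. There $R(\b_{\ft}(z))=\sum_j\e_j\calD_2(z^{A_j})$ is a finite sum of Bloch--Wigner values at algebraic arguments, and since $\calD_2(\alpha)/i=\Im\Li_2(\alpha)+\arg(1-\alpha)\log|\alpha|$ is an (effective) period for algebraic $\alpha$ — the dilogarithm being an iterated integral and logarithms and arguments of algebraic numbers being periods, with periods closed under sum and product — the value is a period. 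The step I expect to be the main obstacle is precisely this passage to arbitrary, possibly positive-dimensional, $X_{\ft}$: the naive finiteness holds only for the generic zero-dimensional case, and it is the rigidity identity $d(R\circ\b_{\ft})=\mathfrak r(\nu\circ\b_{\ft})=0$ that both bounds the image and reduces the period statement on a fat component to a single algebraic point lying on it.
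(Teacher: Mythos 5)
Your part (a) is essentially the paper's proof: the same expansion of $z^{A_j}\wedge(1-z^{A_j})$ over the variables $z_i$, the same substitution of the Variational Equations, and the same two cancellations (symmetry of $Q$ against skew-symmetry of $\wedge$, and the $\pm 1$ wedge terms). Your justification for killing the residual $2$-torsion is slightly more structural than the paper's --- you invoke torsion-freeness of $\BC^*\wedge\BC^*$ via $\BC^*\cong(\BQ/\BZ)\oplus V$, whereas the paper uses the divisibility $\BC^*=(\BC^*)^2$ to compute $z\wedge(\pm 1)=0$ directly --- but these are interchangeable.

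For part (b) you take a genuinely different route. The paper does not prove (b) within Theorem \ref{thm.0} at all: it defers to part (c) of Theorem \ref{thm.1}, introducing the potential function $V_{\ft}$ on an abelian cover, identifying $X_{\ft}$ with (the image of) the critical set $\hatCP$ of $V_{\ft}$, and using the fact that an analytic function is constant on each connected component of its critical locus, plus finiteness of the number of components of a complex affine variety. You instead stay entirely on $X_{\ft}$ and use the classical rigidity identity $d\calD_2(w)=\mathfrak{r}\bigl(w\wedge(1-w)\bigr)$ with $\mathfrak{r}(a\wedge b)=i(\log|a|\,d\arg b-\log|b|\,d\arg a)$, so that $d(R\circ\b_{\ft})=\mathfrak{r}(\nu\circ\b_{\ft})$, and the vanishing is the \emph{same} algebraic cancellation as in (a), now read as an identity of $1$-forms; local constancy plus finiteness of the set of irreducible components gives finiteness of the image. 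This is correct and arguably more self-contained, since it avoids the extended Bloch group machinery entirely; what it does not buy you is the lift to $\hatBC$ and the exponentiated period statement of Theorem \ref{thm.1}, which is the paper's real reason for the detour. Your closing argument for the period claim (reduce to a Zariski-dense algebraic point on each component by local constancy, then note that $\calD_2$ at algebraic arguments is $i$ times a period) is also sound and in fact supplies detail the paper leaves implicit. Both arguments ultimately rest on the identical symbol computation, so I would call this the same proof at its core with a different analytic wrapper for the finiteness step.
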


\subsection{An example}
\lbl{sub.example}

We illustrate Theorem \ref{thm.0} with the following example 
which is nontrivial and of interest to 
Quantum Topology. Consider the \sqterm\ 

\begin{equation}
\lbl{eq.abe1}
\ft_n(q)=q^{a \frac{n(n+1)}{2}} \e^n (q)_n^b 
\end{equation}
where $a \in \BZ$, $\e=\pm 1$ and $b \in \BN$. In other words, 
$$
r=0, \qquad k=(n), \qquad
Q(n)= a \frac{n(n+1)}{2}, \qquad J=b, \qquad
A_j(n)=n, j=1,\dots,b.
$$
Then,
\begin{equation}
\lbl{eq.abe2}
X_{\ft}=\{ z \in \BC^* | z^a (1-z)^b \e=1 \}
\end{equation}
The map \eqref{eq.ebeta} is given by: 

\begin{equation}
\lbl{eq.abe3}
X_{\ft} \longto \calB(\BC), \qquad z \mapsto \b_{\ft}(z):=a [z].
\end{equation}
Equation \eqref{eq.abe2} and the corresponding element
of the Bloch group was also studied by Lewin, using his method of 
{\em ladders}. In that sense, the Variational Equations
\eqref{eq.var} is a generalization of the method of ladders.

Theorem \ref{thm.0} is a practical way of constructing 
elements of the Bloch group $\calB(\BC)$ that are typically defined
over number fields. Even if $X_{\ft}$ is not $0$-dimensional, its image
under $R \circ \b_{\ft}$ always is finite.
This finiteness is positive evidence for Bloch's {\em Rigidity Conjecture}, 
which states that
$$
\calB(\overline{\BQ}) \otimes \BQ \cong \calB(\BC) \otimes \BQ.
$$

\subsection{Acknowledgement}
An early version of this paper was discussed with M. Kontsevich in the
IHES and University of Miami and with S. Bloch in Chicago. 
The author wishes to thank S. Bloch and M. Kontsevich for generously
sharing their ideas with the author and for their hospitality in the IHES 
in the fall of 2006 and in Chicago in the spring of 2007. 
The paper was finished in the spring of 2007, and updated further in the
summer of 2007 to incorporate the final extension of the Rogers dilogarithm
due to Goette and Zickert. The author wishes to thank D. Thurston and 
C. Zickert for many stimulating conversations.

\section{Proof of Theorem \ref{thm.0}}
\lbl{sec.pfthm0}

This section is devoted to a proof of Theorem \ref{thm.0}, using 
elementary symbol-like manipulations. See also \cite[p.43]{Za2}. Zagier
attributes the following proof to Nahm.

\begin{proof}(of Theorem \ref{thm.0})
Let us fix $z=(z_0,\dots,z_r)$ that satisfies the Variational Equations
\eqref{eq.var}. 
We will show that $\nu(\b_{\ft}(z))=0 \in \BC^* \wedge \BC^*$, where $\nu$
is given by \eqref{eq.nu}.  We compute as follows:

\begin{eqnarray*}
\b_{\ft}(z) & = & 
\sum_{j=1}^{J}  
\e_j ( z^{A_j} \wedge (1-z^{A_j}) ).
\end{eqnarray*}
On the other hand, we have:
$$
z^{A_j}=\prod_{i=0}^r z_i^{v_i(A_j)}.
$$
Thus,
\begin{eqnarray*}
\e_j (z^{A_j} \wedge (1-z^{A_j})) &=& \e_j \sum_{i=0}^r z_i^{v_i(A_j)} \wedge 
(1-z^{A_j}) 
\\
&=& \sum_{i=0}^r \e_j v_i(A_j) ( z_i \wedge (1-z^{A_j})) \\
&=& \sum_{i=0}^r z_i \wedge \left((1-z^{A_j})^{v_i(\e_j A_j)}\right).
\end{eqnarray*}
Since $z$ satisfies the Variational Equations \eqref{eq.var}, 
after we interchange the $j$ and $i$ summation, we obtain that:

\begin{eqnarray*}
\b_{\ft}(z) & = & \sum_{i=0}^r \sum_{j=1}^{J}   
z_i \wedge  \left( (1-z^{A_j})^{v_i(\e_j A_j)} \right)\\
&=& \sum_{i=0}^r   z_i \wedge \prod_{j=1}^{J}
 (1-z^{A_j})^{v_i(\e_j A_j)} \\
& = & \sum_{i=0}^r   z_i \wedge (z^{-Q_i} \e^{-v_i(L)}) \\
& = & - \sum_{i=0}^r z_i \wedge z^{\frac{\pt Q}{\pt z_i}}
- \sum_{i=0}^r z_i^{v_i(L)} \wedge \e \\
& = & - \sum_{i=0}^r z_i \wedge z^{Q_i}
- z^L \wedge \e.
\end{eqnarray*}
On the other hand, we have:
$$
\sum_{i=0}^r z_i \wedge z^{Q_i} =
\sum_{i=0}^r z_i \wedge \prod_{j=0}^r z_j^{\frac{\pt Q}{\pt z_i \pt z_j}} 
=\sum_{i,j} \frac{\pt Q}{\pt z_i \pt z_j} ( z_i \wedge z_j ) 
$$
Since $Q$ is a symmetric bilinear form and $\wedge$ is 
skew-symmetric, it follows that
$$
\sum_{i=0}^r z_i \wedge z^{Q_i}=0.
$$
In addition, we claim that for $\e=\pm 1$, we have:

\begin{equation}
\lbl{eq.-1}
z \wedge \e=0 \in \BC^* \wedge \BC^*.
\end{equation}
Indeed, if $\e=1$, then 
$$
z \wedge 1=z \wedge(1 . 1)=z \wedge 1 +z \wedge 1.
$$ 
If $\e=-1$, then 
$$
z^2 \wedge (-1)=2 ( z \wedge (-1))=z \wedge ((-1)^2)=z \wedge 1 =0.
$$
Since $\BC^*=(\BC^*)^2$, the result follows.
Thus, 
$$
\nu(\b_{\ft}(z))=0 \in \BC^* \wedge \BC^*.
$$ 
This proves that $\b_{\ft}$ takes values in the Bloch group, and concludes
part (a) of Theorem \ref{thm.1}.

Part (b) follows from a stronger result; see part (c) of Theorem \ref{thm.1} 
below. The idea is that any analytic function is constant
on a connected component of its critical set. In our case, there exists 
a potential function whose points are the complex points of an affine 
variety defined over $\BQ$ by the Variational Equations. The 
complex points of an affine variety have finitely many connected components.
Finiteness of the image of $R \circ \b_{\ft}$ follows.
\end{proof}

\section{From \qterm s to the extended Bloch group}
\lbl{sec.extending}

In the remainder of the first part of the
paper, we will extend our results from the Bloch
group to the extended Bloch group. As a guide, given a \qterm\ $\ft$,
we will assign

\begin{itemize}
\item[(a)]
A potential function $V_{\ft}$; see Definition \ref{def.potential}.
\item[(b)]
A set of Logarithmic Variational Equations (see \eqref{eq.var}), 
which typically have
a finite set of solutions, given by algebraic numbers. The variational
equations may be identified with the critical points $\hatCP$
of the potential $V_{\ft}$; see Proposition \ref{prop.criticalV}. 
\item[(c)]
A map
$$
\hat{\b}_{\ft}: \hatCP \longto \hatBC
$$
where $\hatBC$ is the extended Bloch group; see Definition
\ref{def.exbloch} and Theorem \ref{thm.1}. 
When $\hat{\b}_{\ft}$ is composed with
a map $\hat{R}$ (given in Equation \eqref{eq.regulator}), 
it essentially
coincides with the evaluation of the potential on its critical values;
see Theorem \ref{thm.1}.
\item[(d)]
The image of the composition $e^{\hat{R}/(2 \pi i)} \circ \hat{\b}_{\ft}$ 
is a finite set of complex numbers, given by exponentials of periods; 
see Theorem \ref{thm.1}.
\end{itemize}

In the last part of the paper, which was a motivation all along, we mention
that quantum invariants of 3-dimensional knotted objects (or general
statistical-mechanical sums) give rise to \sqterm s, a fact that was 
initially observed in \cite{GL1}.

\subsection{The extended Bloch group}
\lbl{sub.extended}

Our aim is to associate elements of the extended Bloch group to every
\qterm. The extended Bloch group was introduced by Neumann in his
investigation of the Cheeger-Chern-Simons classes and hyperbolic 3-manifolds;
see \cite{Ne1}. The definition below is called the very-extended Bloch group
by Neumann, see also \cite{DZ}. 

There is a close relation between the Bloch group $\calB(F)$
and  $\Kind_3(F)$, (indecomposable $K$-theory of $F$); 
see \cite{B1,B2} and \cite{Su1,Su2}, as well as Section \ref{sub.cousins}
below.
Unfortunately, the relation among $\calB(F)$ and $\Kind_3(F)$ is 
known modulo torsion, and the torsion does not match. For $F=\BC$, this is 
exactly what the extended Bloch group
captures. The idea is that: 
\begin{center}
\fbox{torsion is encoded by the choice of the branch of the logarithms}.
\end{center}
More precisely, consider the doubly punctured plane
\begin{equation}
\lbl{eq.C**}
\BCs:=\BC\setminus\{0,1\}
\end{equation}
and let $\hat{\BC}$ denote the universal abelian cover of $\BCs$.
We can represent the Riemann surface of 
$\BCs$ as follows. Let $\BC_{\text{cut}}$ denote 
$\BCs$ cut open along each of the intervals $(-\infty,0)$
and $(1,\infty)$ so that each real number $r$ outside $[0,1]$ occurs
twice in $\BC_{\text{cut}}$. Let us denote the two occurrences of $r$ by 
$r+0i$ and $r-0i$ respectively. It is now easy to see that $\hat{\BC}$
is isomorphic to the surface obtained from $\BC_{\text{cut}} \times 2\BZ \times
2\BZ$ by the following identifications:

\begin{eqnarray*}
(x+0i;2p,2q) & \sim & (x-0i;2p+2,2q) \quad \text{for} \quad x \in (-\infty,0)
\\
(x+0i;2p,2q) & \sim & (x-0i;2p,2q+2) \quad \text{for} \quad x \in (1,\infty)
\end{eqnarray*}
This means that points in $\hat{\BC}$ are of the form $(z,p,q)$ with 
$z \in \BCs$ and $p,q$ even integers. Moreover, $\hat{\BC}$
is the Riemann surface of the function

\begin{equation*}
\BCs \longto \BC^2, \qquad z \mapsto (\Log(z), \Log(1-z)).
\end{equation*}
where $\Log$ denotes the {\em principal branch} of the logarithm.
Consider the set

\begin{eqnarray*}
\FT:=\{(x,y, \frac{y}{x}, \frac{1-x^{-1}}{1-y^{-1}},\frac{1-x}{1-y})\}
\subset (\BCs)^5
\end{eqnarray*}
of 5-tuples involved in the 5-term relation. Also, let

\begin{eqnarray*}
\FT_0:=\{(x_0,\dots,x_4) \in \FT | 0 < x_1 < x_0 < 1 \}
\end{eqnarray*}
and define $\hatFT \subset \hat{\BC}^5 $ to be the component of the
preimage of $\FT$ that contains all points $((x_0;0,0), \dots, (x_4;0,0))$
with $(x_0,\dots,x_4) \in \FT_0$.
See also \cite[Rem.2.1]{DZ}.

We can now define the extended Bloch group, following \cite[Def.1.5]{GZ}.

\begin{definition}
\lbl{def.exbloch}
\rm{(a)} The {\em extended pre-Bloch group} $\widehat{\calP(\BC)}$ is the 
abelian group generated by the symbols $[z;p,q]$ with $(z;p,q) \in \hat{\BC}$,
subject to the relation:
\begin{eqnarray}
\lbl{eq.ex5term}
\sum_{i=0}^4 (-1)^i [x_i;p_i,q_i]=0 \quad \text{for} \quad
((x_0;p_0,q_0),\dots, (x_4;p_4,q_4)) \in \hatFT.
\end{eqnarray}
\rm{(b)} The {\em extended Bloch group} $\hatBC$ is the 
kernel of the homomorphism
\begin{equation}
\lbl{eq.hatnu}
\hat{\nu}: \widehat{\calP(\BC)} \longto \BC \wedge \BC, \qquad
[z;p,q] \mapsto 
(\Log(z)+p \pi i) \wedge 
(-\Log(1-z)+q \pi i)
\end{equation}
\rm{(c)}
We will call the 2-term complex \eqref{eq.hatnu} the 
{\em extended Bloch-Suslin} complex.
\end{definition}

For a comparison between the extended Bloch-Suslin complex and the
Suslin complex, see Section \ref{sub.compareBS}.

\begin{remark}
\lbl{rem.4versions}
There are four versions of the extended Bloch group, depending on whether 
$(p,q) \in \BZ^2$ versus $(p,q) \in (2 \BZ)^2$, and whether we include the
transfer relation of \cite[Eqn.3]{Ne1}. For a comparison of Neumann's version
of the extended Bloch group with the above definition, see \cite[Rem.4.3]{GZ}.
\end{remark}

\subsection{The Rogers dilogarithm}
\lbl{sub.rogers}

The extended Bloch-Suslin complex \eqref{eq.hatnu} is very closely related
to the functional properties of a normalized form of the dilogarithm. 
In fact, it would be hard to motivate the extended Bloch-Suslin complex 
without knowing the Rogers dilogarithm and vice-versa.

Following \cite{Ne1,DZ}, the {\em Rogers
dilogarithm} is the following function defined on the open interval $(0,1)$:

\begin{equation}
\lbl{eq.rogers}
L(z)=\Li_2(z) + \frac{1}{2} \Log(z)\Log(1-z) -\frac{\pi^2}{6}
\end{equation}
where 
$$
\Li_2(z)=-\int_0^z \frac{\Log(1-t)}{t} dt=\sum_{n=1}^\infty \frac{z^n}{n^2}
$$ 
is the classical {\em dilogarithm function}. In \cite[Defn.1.5]{GZ},
Goette and Zickert extended $L(z)$ to a multivalued analytic function on 
$\hat{\BC}$ as follows (see \cite[Prop.2.5]{Ne1}):

\begin{equation}
\lbl{eq.hatrogers0}
\hat{R}:  \hat{\BC}  \longto \BC/\BZ(2) 
\end{equation}
\begin{eqnarray}
\lbl{eq.hatrogers}
\hat{R}(z;p,q) &:= & \Li_2(z)+\frac{1}{2}(\Log(z)+\pi i p)(\Log(1-z)+\pi i q)
-\frac{\pi^2}{6} \\
& = & L(z)+\frac{\pi i}{2} (q \Log(z)+p \Log(1-z))- \frac{\pi^2 p q}{2}.
\end{eqnarray}
where as usual, for a subgroup $K$ of $(\BC,+)$ and an integer $n \in \BZ$, 
we denote $K(n)=(2 \pi i)^nK \subset \BC$.

Let us comment a bit on the properties of the Rogers dilogarithm.

\begin{remark}
\lbl{rem.rogers0}
A computation involving the monodromy of the $\Li_2$ function shows that
the dilogarithm $\Li_2$ has an analytic extension:
\begin{equation}
\lbl{eq.extLi2}
\Li_2: \hat{\BC} \longto \BC/\BZ(2)
\end{equation}
See \cite[Prop.1]{Oe}. Compare also with \cite[Prop.2.5]{Ne1}. 
In \cite[Lem.2.2]{GZ} it is shown that $\hat{R}$ takes values in $\BC/\BZ(2)$
and that the Rogers dilogarithm is defined
on the extended pre-Bloch group; we will denote the extension by $\hat{R}$.
In other words, we have:
\begin{equation}
\lbl{eq.rogers3}
\hat{R}: \widehat{\calP(\BC)} \longto \BC/\BZ(2).
\end{equation}
In \cite{DZ} and \cite{GZ} it was shown that 
the Rogers dilogarithm coincides with the $\BC/\BZ(2)$-valued 
{\em Cheeger-Chern-Simons class} (denoted $\hat{C}_2$ in \cite{DZ}).
For a discussion on this matter, see \cite[Thm.4.1]{DZ}.
\end{remark}

\begin{remark}
\lbl{rem.rogers2}
It is natural to ask why to modify the dilogarithm as in Rogers
extension. The motivation
is that $\hat{L}$ (but {\em not} the dilogarithm function $\Li_2(z)$
nor the Bloch-Wigner dilogarithm) satisfies the extended 5-term relation 
$\hatFT$; see \cite[Sec.2]{DZ} and \cite[Lem.2.2]{GZ}. 
\end{remark}

\begin{remark}
\lbl{rem.rogers3} 
For the purposes of comparing the Rogers dilogarithm with other
special functions (such as the entropy function discussed in \cite{Ga1})
It is easy to see that the derivative of the Rogers dilogarithm
is given by:
\begin{equation}
\lbl{eq.derL}
L'(z)=-\frac{1}{2} \left( \frac{\log(1-z)}{z}+\frac{\log(z)}{1-z} \right).
\end{equation}
\end{remark}

\noindent
Thus, $\hat{R}$ descends to a map:

\begin{equation}
\lbl{eq.regulator}
\hat{R}: \hatBC \longto \BC/\BZ(2)
\end{equation}
which can be exponentiated to a map:

\begin{equation}
\lbl{eq.expregulator}
e^{\frac{1}{2 \pi i}\hat{R}}: \hatBC \longto \BC^*.
\end{equation}

The maps $R$ and $\hat{R}$ of the Bloch group and its extended version
are part of the following commutative diagram: 

$$
\divide\dgARROWLENGTH by2
\begin{diagram}
\node{\hatBC}
\arrow{e,t}{\hat{R}}
\arrow{s}
\node{\BC/\BZ(2)} 
\arrow{s,r}{i \Im } 
\arrow{e,t}{e^{\frac{\bullet}{2 \pi i}}}
\node{\BC^*}
\arrow{s,r}{| \bullet |}
\\
\node{\calB(\BC)}
\arrow{e,t}{R}
\node{i \BR}
\arrow{e,t}{e^{\frac{\bullet}{2 \pi i}}}
\node{\BR^+}
\end{diagram}
$$
For a proof of the commutativity of the left square, see \cite[Prop.4.6]{DZ}.
Let us end this section with a remark.

\begin{remark}
\lbl{rem.rogers4}
Even though the regulators $R$ and $\hat{R}$ are defined on the (extended)
pre-Bloch groups, the following diagram is {\em not} commutative:
$$
\divide\dgARROWLENGTH by2
\begin{diagram}
\node{\hatBC}
\arrow{e,t}{\hat{R}}
\arrow{s}
\node{\BC/(2 \pi^2 \BZ)} 
\arrow{s,r}{i \Im } 
\\
\node{\calB(\BC)}
\arrow{e,t}{R}
\node{i \BR}
\end{diagram}
$$
\end{remark}

\subsection{Two cousins of the extended Bloch group: 
$\Kind_3(\BC)$ and $\CH^2(\BC,3)$}
\lbl{sub.cousins}

The Bloch group $\calB(F)$ of a field has two cousins: $\Kind_3(F)$ and the 
higher Chow groups $\CH^2(F,3)$, also defined by Bloch; see \cite{B2}.
A spectral sequence argument (attributed to Bloch and Bloch-Lichtenbaum)
and some low-degree computations imply that for every infinite field $F$
we have an isomorphism:

\begin{equation}
\lbl{eq.KindCH}
\Kind_3(F) \cong \CH^2(F,3).
\end{equation}
For a detailed discussion, see \cite[Prop.5.5.20]{E-V}.
On the other hand, in \cite[Thm.5.2]{Su1} Suslin proves the existence of 
a short exact sequence:

\begin{equation}
\lbl{eq.suslin1}
0 \longto \text{Tor}(\mu_F,\mu_F\tilde{)} \longto \Kind_3(F) \longto \calB(F)
\longto 0.
\end{equation}
where $\text{Tor}(\mu_F,\mu_F\tilde{)}$ is the unique nontrivial extension of 
$\text{Tor}(\mu_F,\mu_F)$, where $\mu_F$ is the roots of unity of $F$. For
$F=\BC$, the above short exact sequence becomes:

\begin{equation}
\lbl{eq.suslin2}
0 \longto \BQ/\BZ \longto \Kind_3(\BC) \longto \calB(\BC)
\longto 0.
\end{equation}
Moreover, Suslin proves in \cite{Su1} that $\calB(\BC)$ is a $\BQ$-vector 
space. 

On the other hand, in \cite[Thm.3.12]{GZ} Goette and Zickert
prove that the extended Bloch group fits in a short exact sequence:

\begin{equation}
\lbl{eq.Ne}
0 \longto \BQ/\BZ \stackrel{\hat{\psi}}\longto \hatBC 
\longto \calB(\BC) \longto 0,
\end{equation}
for an explicit map $\hat{\psi}$. 
Equations 
\eqref{eq.KindCH}, \eqref{eq.suslin2}, \eqref{eq.Ne}, together with the fact 
that $\calB(\BC)$ is a $\BQ$-vector space, imply the following.

\begin{proposition}
\lbl{prop.compare}
There exist abstract isomorphisms:
\begin{equation}
\lbl{eq.abiso}
\hatBC \cong \Kind_3(\BC) \cong \CH^2(F,3).
\end{equation}
\end{proposition}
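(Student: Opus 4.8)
The plan is to simply assemble the three cited equations into a chain of isomorphisms, with the final step being a clean application of the fact that $\calB(\BC)$ is a $\BQ$-vector space. First I would record the two short exact sequences that both have $\calB(\BC)$ as their cokernel: from \eqref{eq.suslin2},
$$
0 \longto \BQ/\BZ \longto \Kind_3(\BC) \longto \calB(\BC) \longto 0,
$$
and from \eqref{eq.Ne}, the Goette--Zickert sequence
$$
0 \longto \BQ/\BZ \stackrel{\hat\psi}\longto \hatBC \longto \calB(\BC) \longto 0.
$$
The key observation is that both are extensions of the \emph{same} group $\calB(\BC)$ by the \emph{same} group $\BQ/\BZ$, so it suffices to show that any two such extensions are abstractly isomorphic as abelian groups.

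The crucial input is Suslin's theorem that $\calB(\BC)$ is a $\BQ$-vector space, hence divisible and torsion-free. The main step is then a computation of the relevant $\mathrm{Ext}$ group: since $\calB(\BC)$ is divisible and torsion-free, it is in particular projective over $\BZ$ in the sense that $\mathrm{Ext}^1_{\BZ}(\calB(\BC), \BQ/\BZ)=0$. (Concretely, $\mathrm{Ext}^1_{\BZ}(V,-)=0$ for any $\BQ$-vector space $V$, because a $\BQ$-vector space is a flat, and indeed projective, $\BZ$-module relative to extensions: applying $\mathrm{Hom}_{\BZ}(-,\BQ/\BZ)$ and using that $\BQ/\BZ$ is divisible, one sees the long exact sequence forces the $\mathrm{Ext}$ term to vanish.) Therefore every extension of $\calB(\BC)$ by $\BQ/\BZ$ splits, and in particular both sequences above split, giving abstract isomorphisms
$$
\Kind_3(\BC) \cong \BQ/\BZ \oplus \calB(\BC) \cong \hatBC.
$$
Combining this with the isomorphism \eqref{eq.KindCH}, specialized to the infinite field $F=\BC$,
$$
\Kind_3(\BC) \cong \CH^2(\BC,3),
$$
yields the full chain in \eqref{eq.abiso}.

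The step I expect to be the genuine mathematical content is the $\mathrm{Ext}$ vanishing: everything else is formal bookkeeping, but the splitting is exactly where Suslin's $\BQ$-vector-space structure on $\calB(\BC)$ is indispensable. It is worth emphasizing that the resulting isomorphisms are only \emph{abstract} (noncanonical), precisely as the statement claims; the extensions \eqref{eq.suslin2} and \eqref{eq.Ne} need not be isomorphic as extensions, and indeed the interest of the extended Bloch group lies in the fact that $\hatBC$ carries the additional regulator structure $\hat R$ that distinguishes it from the abstractly isomorphic $\Kind_3(\BC)$. One minor point to check is the typographical consistency of the target $\CH^2(F,3)$ versus $\CH^2(\BC,3)$ in the statement; I would read $F=\BC$ throughout, so that \eqref{eq.KindCH} applies directly.
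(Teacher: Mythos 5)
Your argument is essentially the one the paper intends: the paper offers no proof beyond the remark that \eqref{eq.KindCH}, \eqref{eq.suslin2} and \eqref{eq.Ne}, together with Suslin's theorem that $\calB(\BC)$ is a $\BQ$-vector space, imply the statement, and splitting the two extensions of $\calB(\BC)$ by $\BQ/\BZ$ and then invoking \eqref{eq.KindCH} with $F=\BC$ is exactly that bookkeeping. One of the justifications you give for the splitting is false as stated, though: it is not true that $\mathrm{Ext}^1_{\BZ}(V,-)=0$ for a $\BQ$-vector space $V$, since a nonzero $\BQ$-vector space is flat but never projective over $\BZ$ (projective $\BZ$-modules are free abelian), and indeed $\mathrm{Ext}^1_{\BZ}(\BQ,\BZ)\neq 0$. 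The instance you actually need, $\mathrm{Ext}^1_{\BZ}(\calB(\BC),\BQ/\BZ)=0$, holds for the simpler reason already hiding in your parenthetical: $\BQ/\BZ$ is divisible, hence injective as a $\BZ$-module, so \emph{every} extension of \emph{any} abelian group by $\BQ/\BZ$ splits. In particular, contrary to your closing emphasis, Suslin's $\BQ$-vector-space theorem is not where the splitting comes from; with that correction the proof is complete and agrees with the paper's.
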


In addition there are maps (sometimes also known by the name
of {\em cycle maps} or {\em Abel-Jacobi maps}):
\begin{eqnarray*}
R': \Kind_3(\BC) & \longto & H^1_D(\Spec(\BC),\BZ(2))= \BC/\BZ(2) \\
R'': \CH^2(F,3)   & \longto & H^1_D(\Spec(\BC),\BZ(2))= \BC/\BZ(2)
\end{eqnarray*}
where $H_D$ denotes Deligne cohomology.
For an explicit formula for $R''$, see \cite[Sec.5.7]{KLM-S}.

Let us end this section with a problem and a question.
Recall that the extended Bloch group can be defined for any
subfield $F$ of the complex numbers, as discussed in \cite{Ne1}.

\begin{problem}
\lbl{prob.exbloch}
Define explicit isomorphisms in \eqref{eq.abiso} that commute with the
maps $R,R'$ and $R''$.
\end{problem}
We will come back to this problem in a forthcoming publication; \cite{GZ}.
For a careful relation between $\widehat{\calB(F)}$ and $\Kind_3(F)$
in the case of a number field $F$, see \cite{Zi}.

\subsection{\qterm s and potential functions}
\lbl{sub.potential}

In this section, we will assign elements of $\hatBC$
to a \qterm\ $\ft$; see Theorem \ref{thm.1} below. 

\begin{definition}
\lbl{def.Phi}
Consider the following multivalued function on $\BCs$:
\begin{equation}
\lbl{eq.Phi}
\Phi: \hat{\BC} \longto \BC/\BZ(1), \qquad
\Phi(x)=\frac{1}{2 \pi i}\left(\frac{\pi^2}{6}-\Li_2(x) \right).
\end{equation}
\end{definition}

\noindent
Remark \ref{rem.rogers0} shows that indeed $\Phi$ takes values in 
$\BC/\BZ(1)$. The function $x \mapsto \Phi(e^{2 \pi i x})$ appears in work of 
Kashaev on the Volume Conjecture; see \cite{Ka}. The relevance of the 
special function $\Phi$ is that it describes the growth rate of
the $q$-factorials at complex roots of unity; see Lemma
\ref{lem.qfactorials} in Section \ref{sub.motivation}.
The next definition associates a potential function to a \qterm.

\begin{definition}
\lbl{def.potential}
Given a \qterm\ $\ft$ as in \eqref{eq.qterm}, 
let us define its potential function $V_{\ft}$ by:
\begin{equation}
\lbl{eq.potential}
V_{\ft}(z)=\frac{1}{2 \pi i} Q(\Log( z)) + \frac{1}{2 \pi i} \Log \e \cdot 
\Log(z^L) +\sum_{j=1}^{J} \e_j \Phi(z^{A_j})
\end{equation}
where $z=(z_0,z_1,\dots,z_r)$ and $\Log(z)=(\Log(z_0),\dots,\Log(z_r))$.
 Let $\hatCP$ and 
denote the {\em critical points} of the potential $V_{\ft}$.
\end{definition}

Since $V_{\ft}(z)$ is a multivalued function, let us explain its domain.
Given a \qterm\ $\ft$ as in \eqref{eq.qterm}, let 
denote 

\begin{equation}
\lbl{eq.Hft}
H_{\ft}=\{z \sub (\BC^*)^{r+1} \, | z^L
\prod_{j=1}^J (1-z^{A_j}) \prod_{i=0}^r (1-z_i)=0
\}.
\end{equation}
Let $\calD_{\ft}$ denote the universal abelian cover of $(\BC^*)^{r+1}\setminus
H_{\ft}$. 
Observe that for every $i=0,\dots,r$, $j=1,\dots,J$
there are well-defined analytic maps:

\begin{equation}
\lbl{eq.proj} 
\pi_i, \pi_{A_j}, \pi_L : (\BC^*)^{r+1}\setminus H_{\ft} \to \BCs
\end{equation}
given by $\pi_i(z)=z_i$, $\pi_{A_j}(z)=z^{A_j}$ and $\pi_L(z)=z^L$; using
the notation of \eqref{eq.zA}. Lifting them
to the universal abelian cover, gives rise to analytic maps:

\begin{equation}
\lbl{eq.hatproj} 
\hat{\pi}_i,\hat{\pi}_{A_j},\hat{\pi}_L : 
\calD_{\ft} \to \hat{\BC}.
\end{equation}
We denote the image of $z \in \calD_{\ft}$ under these maps by $z_i$,
$z^{A_j}$ and $z^L$ respectively.
With these conventions we have:

\begin{lemma}
\lbl{lem.domainVt}
Equation \eqref{eq.potential} defines an analytic function:
\begin{equation}
\lbl{eq.domainVt}
V_{\ft}: \calD_{\ft} \longto \BC/\BZ(1).
\end{equation}
\end{lemma}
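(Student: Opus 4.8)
The plan is to verify separately that each of the three summands in \eqref{eq.potential} defines an analytic function on $\calD_{\ft}$, where throughout "analytic with values in $\BC/\BZ(1)$" is understood via the covering map $\BC \to \BC/\BZ(1)$, which is a local analytic isomorphism since $\BZ(1)=2\pi i\,\BZ$ is discrete. The whole reason for passing to the universal abelian cover $\calD_{\ft}$ of $(\BC^*)^{r+1}\setminus H_{\ft}$ is that the multivaluedness introduced by the logarithms disappears there: by \eqref{eq.hatproj} the projections lift to genuine analytic maps $\hat{\pi}_i,\hat{\pi}_{A_j},\hat{\pi}_L:\calD_{\ft}\to\hat{\BC}$, and on $\hat{\BC}$ — the Riemann surface of $z\mapsto(\Log z,\Log(1-z))$ — both $\Log z$ and $\Log(1-z)$ are single-valued analytic functions. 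Composing these with the lifts therefore produces well-defined analytic branches of $\Log z_i$, $\Log z^{A_j}$, $\Log z^L$ (and of $\Log(1-z^{A_j})$, etc.) on $\calD_{\ft}$.

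First I would treat the first two terms. Setting $\Log z_i:=\Log\circ\hat{\pi}_i$, the expression $\frac{1}{2\pi i}Q(\Log z)$ is a fixed quadratic polynomial in the analytic functions $\Log z_0,\dots,\Log z_r$, hence an honest $\BC$-valued analytic function on $\calD_{\ft}$. Similarly $\frac{1}{2\pi i}\Log\e\cdot\Log(z^L)$ is analytic and $\BC$-valued once a value of the constant $\Log\e\in\{0,\pi i\}$ is fixed, since it involves only the branch $\Log z^L:=\Log\circ\hat{\pi}_L$ (note this term uses $\Log z^L$ but not $\Log(1-z^L)$, so the condition $z^L\neq 0$, automatic on $(\BC^*)^{r+1}$, already suffices). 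Reducing these two $\BC$-valued functions mod $\BZ(1)$ gives analytic maps $\calD_{\ft}\to\BC/\BZ(1)$.

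Next I would handle the dilogarithm term. By Remark \ref{rem.rogers0} and \eqref{eq.extLi2}, $\Li_2$ admits an analytic extension $\hat{\BC}\to\BC/\BZ(2)$, so by Definition \ref{def.Phi} the function $\Phi=\frac{1}{2\pi i}\bigl(\frac{\pi^2}{6}-\Li_2\bigr)$ extends to an analytic map $\hat{\BC}\to\BC/\BZ(1)$, dividing by $2\pi i$ carrying $\BZ(2)=(2\pi i)^2\BZ$ onto $\BZ(1)=(2\pi i)\BZ$. Composing with the analytic lift $\hat{\pi}_{A_j}:\calD_{\ft}\to\hat{\BC}$, each $\Phi(z^{A_j})$ is analytic into $\BC/\BZ(1)$, and the integer combination $\sum_{j}\e_j\Phi(z^{A_j})$ is again analytic with values in $\BC/\BZ(1)$. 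Adding the three contributions — the first two $\BC$-valued, read mod $\BZ(1)$, the third $\BC/\BZ(1)$-valued — yields the asserted analytic function $V_{\ft}:\calD_{\ft}\to\BC/\BZ(1)$.

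The only genuine subtlety, and the step I would treat most carefully, is the passage to the quotient $\BC/\BZ(1)$: one must confirm that the indeterminacy of the third term is no coarser and no finer than $\BZ(1)$, so that $\BC/\BZ(1)$ is exactly the right target and the reduction is consistent across the three summands. This is precisely what the monodromy computation behind \eqref{eq.extLi2} controls — the multivaluedness of $\Li_2$ over $\BCs$, hence of $\Phi$ over $\hat{\BC}$, is exactly $\BZ(2)$ (respectively $\BZ(1)$). Everything else is the formal observation that sums, integer multiples, and fixed quadratic polynomials of analytic $\BC$- or $\BC/\BZ(1)$-valued functions are again analytic, which establishes the lemma.
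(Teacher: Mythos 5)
Your proof is correct and follows exactly the route the paper intends: the paper offers no explicit proof of this lemma, presenting it as immediate from the conventions set up just before it (the lifts \eqref{eq.hatproj} to the universal abelian cover and the analytic extension $\Li_2:\hat{\BC}\to\BC/\BZ(2)$ of Remark \ref{rem.rogers0}), and your argument simply spells out that implicit justification term by term. Nothing is missing; your observation that only $\Log(z^L)$, not $\Log(1-z^L)$, is needed for the middle term is a correct and worthwhile clarification of the role of $H_{\ft}$.
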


The next proposition describes the critical points and the critical values
of the potential function.

\begin{proposition}
\lbl{prop.criticalV}
\rm{(a)} 
The critical points $z$ of $V_{\ft}$ are the solutions
to the following system of {\em Logarithmic Variational Equations}:
\begin{equation}
\lbl{eq.varlog}
\sum_{j=1}^{J}  \e_j v_i(A_j) \Log(1-z^{A_j}) 
+ 
\frac{\pt Q}{\pt z_i}(\Log(z)) + 
\Log \e \cdot v_i(L)=0
\end{equation}
for $i=0,\dots,r$.
\newline
\rm{(b)} There is a map:
\begin{equation}
\lbl{eq.exp}
\hatCP \longto X_{\ft}, \qquad z \mapsto (\pi(z_1),\dots,\pi(z_r))
\end{equation}
where $\pi: \hat{\BC} \to \BCs$ is the projection map.
\end{proposition}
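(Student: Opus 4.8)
The plan is to work entirely on the universal abelian cover $\calD_{\ft}$, where by Lemma \ref{lem.domainVt} every logarithm occurring in \eqref{eq.potential} is a single-valued analytic function, and to use as local coordinates the single-valued lifts $u_i := \Log(z_i) = \hat{\pi}_i(z)$ of \eqref{eq.hatproj}. Since $\calD_{\ft}\to(\BC^*)^{r+1}\setminus H_{\ft}$ is a local biholomorphism, a point $z$ is critical for $V_{\ft}$ exactly when $\pt V_{\ft}/\pt u_i=0$ for $i=0,\dots,r$. Thus part (a) reduces to a gradient computation in the $u_i$, and part (b) is obtained by exponentiating the resulting equations and pushing them down along $\pi$.

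For part (a) I would differentiate the three summands of \eqref{eq.potential} with respect to $u_i$. The key point is that, as differentials on $\calD_{\ft}$, one has $d\Log(z^{A_j})=\sum_k v_k(A_j)\,du_k$ and $d\Log(z^L)=\sum_k v_k(L)\,du_k$ exactly, since differentiating annihilates any additive branch constant in the lifts $\hat{\pi}_{A_j},\hat{\pi}_L$. Hence the quadratic term contributes $\frac{1}{2\pi i}\frac{\pt Q}{\pt z_i}(\Log z)$ by the chain rule and symmetry of $Q$; the term $\frac{1}{2\pi i}\Log\e\cdot\Log(z^L)$ contributes $\frac{1}{2\pi i}\Log\e\cdot v_i(L)$; and for the dilogarithm terms, from \eqref{eq.Phi} and $\Li_2'(x)=-\Log(1-x)/x$ we get $\Phi'(x)=\frac{1}{2\pi i}\Log(1-x)/x$, so together with $\pt(z^{A_j})/\pt u_i=v_i(A_j)\,z^{A_j}$ these contribute $\frac{1}{2\pi i}\sum_j\e_j v_i(A_j)\Log(1-z^{A_j})$. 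Multiplying the condition $\pt V_{\ft}/\pt u_i=0$ by $2\pi i$ produces precisely \eqref{eq.varlog}, which proves (a).

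For part (b) I would exponentiate \eqref{eq.varlog}. Using $e^{\Log w}=w$ (valid because $\pi$ simply forgets the branch data), together with $\exp(Q_i(\Log z))=z^{Q_i}$ and $\exp(\Log\e\cdot v_i(L))=\e^{v_i(L)}$, the $i$-th logarithmic equation turns into the multiplicative relation
\[
z^{Q_i}\,\e^{v_i(L)}\prod_{j=1}^{J}(1-z^{A_j})^{v_i(\e_j A_j)}=1,
\]
which is the $i$-th Variational Equation \eqref{eq.var} cutting out $X_{\ft}$. Because the two sides now involve only the honest complex numbers $\pi(z_i)$ and $\pi(z^{A_j})$ and not the covering data, the relation depends only on the projection $(\pi(z_0),\dots,\pi(z_r))$; this both shows that the projection lies in $X_{\ft}$ and that the assignment $z\mapsto(\pi(z_0),\dots,\pi(z_r))$ is a well-defined map $\hatCP\to X_{\ft}$.

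The step I expect to be the main obstacle is the branch bookkeeping in part (b): although differentials behave well in (a), passing from the undifferentiated equation \eqref{eq.varlog} to the multiplicative form requires tracking every additive constant — the branch integers $p\pi i,q\pi i$ recorded by $\hat{\BC}$, the contribution of $\Log\e$, and the constant produced by the non-homogeneous (linear) part of $Q$ — and confirming that upon exponentiation these collapse to exactly the factor demanded by the normalization in \eqref{eq.var}. Making this matching precise amounts to fixing compatible choices of the lifts $\hat{\pi}_i,\hat{\pi}_{A_j},\hat{\pi}_L$ on the distinguished component of $\calD_{\ft}$, so that the identities $\Log(z^{A_j})=\sum_i v_i(A_j)\Log(z_i)$ and $\Log(z^L)=\sum_i v_i(L)\Log(z_i)$ hold on the nose and not merely modulo $2\pi i\BZ$; this is the one genuinely delicate point, and it is exactly where the conventions in the definitions of $\hat{\BC}$ and of $X_{\ft}$ must be reconciled.
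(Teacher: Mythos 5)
Your proposal is correct and follows essentially the same route as the paper: the paper's proof of part (a) is exactly the computation of $2\pi i\, z_i\,\pt V_{\ft}/\pt z_i$ (i.e.\ differentiation in the logarithmic coordinates $\Log(z_i)$) using $\Phi'(x)=\frac{1}{2\pi i}\Log(1-x)/x$. The paper leaves part (b) implicit, and your exponentiation argument, including the attention to branch constants, is the natural way to fill it in.
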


For $A=A_j$, ($j=1,\dots,J$) or $A=L$, 
let $p_{z,A} \in 2 \BZ$ (or simply, $p_A$ if
$z$ is clear) be defined so that we have:

\begin{equation}
\lbl{eq.LogA}
\Log(z^A)=\sum_{i=0}^r v_i(A)\Log(z_i) - 
p_{z,A} \pi i.
\end{equation}

Given $w=(x;p_0,q_0) \in \hat{\BC}$ and even integers $p,q \in 2 \BZ$
let us denote

\begin{equation}
\lbl{eq.pqw}
T_1^p T_0^q (w):=(x;p_0+p,q_0+q) \in \hat{\BC}
\end{equation}
In other words, $T_1$ and $T_0$ are generators for the deck transformations
of the $\BZ^2$-cover:

\begin{equation}
\lbl{eq.pi}
\pi: \hat{\BC} \longto \BCs.
\end{equation}

We need one more piece of notation: for $z \in \calD_{\ft}$, consider the
corresponding elements $z^L$ and $z_i$ of $\hat{\BC}$ for $i=0,\dots,r$. We
denote by $z^{-L/2} \in \hat{\BC}$ the unique element of $\hat{\BC}$
that solves the equation:

\begin{equation}
\lbl{eq.sqrt}
-\Log(z^L)-\Log(z^{-L/2})+\frac{1}{2} \sum_{i=0}^r v_i(L) \Log(z_i)=0.
\end{equation}

\begin{definition}
\lbl{def.beta}
With the above conventions, consider the map
\begin{equation}
\lbl{eq.hatbeta}
\hat{\b}_{\ft}: \hatCP \longto \widehat{\calP(\BC)}
\end{equation}
given by:
\begin{equation}
\lbl{eq.hatbetab}
w \mapsto \hat{\b}_{\ft}(w):=[z^{-L/2};0,2 \frac{\Log\e}{\pi i}]
-[z^{-L/2};0,0]+
\sum_{j=1}^{J} \e_j [T_1^{p_{z,A_j}}(z^{A_j})] 
\end{equation}
\end{definition}

Our next definition assigns a numerical invariant to a special
term $\ft$.

\begin{definition}
\lbl{def.hatSE}
For a \qterm\ $\ft$, let
\begin{equation}
\lbl{eq.hatS}
\CV_{\ft}=\{e^{-V_{\ft}(z)} \,\, | z \,\, 
\text{satisfies \eqref{eq.varlog}} \}.
\end{equation}
\end{definition}

Our next main theorem links the exponential of the critical values of $V_{\ft}$
with the values of $\hat{\b}_{\ft}$. Let $\calP$ denote
the set of {\em periods} in the sense of Kontsevich-Zagier, 
\cite{KZ}. $\calP$ is a countable subset of the complex numbers.

\begin{theorem}
\lbl{thm.1}
\rm{(a)} The map $\hat{\b}_{\ft}$ descends to a map 
\begin{equation}
\lbl{eq.hatbeta2}
\hat{\b}_{\ft}: \hatCP \longto \hatBC
\end{equation}
which we denote by the same name.
\newline
\rm{(b)} We have a commutative diagram:
$$
\divide\dgARROWLENGTH by2
\begin{diagram}
\node{\hatCP}
\arrow[2]{e,t}{\hat{\b}_{\ft}}\arrow{se,r}{e^{-V_{\ft}}} 
\node[2]{\hatBC}
\arrow{sw,r}{e^{\frac{1}{2\pi i}\hat{R}}} \\
\node[2]{\BC^*}
\end{diagram}
$$
\newline
\rm{(c)} 
$\CV_{\ft}$ is a finite subset of $e^{\calP}$. Thus, we get a map:
\begin{equation}
\lbl{eq.finite}
\CV: \text{\qterm s} \longto \text{Finite subsets of $e^{\calP}$}.
\end{equation}
\end{theorem}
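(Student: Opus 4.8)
The strategy is to prove the three parts in the order (a), (b), (c), since (b) effectively recovers the Bloch-group statement of Theorem~\ref{thm.0}(a) as a projection and (c) will follow from the same finiteness principle already sketched at the end of the proof of Theorem~\ref{thm.0}. For part (a), I would mimic the symbol computation in the proof of Theorem~\ref{thm.0} but now carried out in $\BC\wedge\BC$ under the map $\hat{\nu}$ of \eqref{eq.hatnu}, keeping careful track of the logarithm branches $p,q$. Concretely, I would expand $\hat{\nu}(\hat{\b}_{\ft}(w))$ term by term: the two correction symbols $[z^{-L/2};0,2\Log\e/(\pi i)]-[z^{-L/2};0,0]$ contribute a wedge involving $\Log\e$, while each $\e_j[T_1^{p_{z,A_j}}(z^{A_j})]$ contributes $\e_j(\Log(z^{A_j})+p_{z,A_j}\pi i)\wedge(-\Log(1-z^{A_j})+q_{A_j}\pi i)$. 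Using \eqref{eq.LogA} to rewrite $\Log(z^{A_j})+p_{z,A_j}\pi i=\sum_i v_i(A_j)\Log(z_i)$, the first factor becomes the genuine linear combination of the $\Log(z_i)$, and the Logarithmic Variational Equations \eqref{eq.varlog} are exactly the statement needed to collapse the inner $\wedge$-sum, precisely as the Variational Equations did in Theorem~\ref{thm.0}. The role of the defining equation \eqref{eq.sqrt} for $z^{-L/2}$ is to supply the correct half-integer branch so that the $\Log\e$ and quadratic-form contributions cancel; I would verify that the symmetric quadratic form $Q$ kills its diagonal contribution by skew-symmetry of $\wedge$, just as before. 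The well-definedness of the descent to the extended pre-Bloch group (i.e.\ that $\hat{\b}_{\ft}$ respects the five-term relation $\hatFT$) I would handle by invoking that the map is built from the symbols $[z^{A_j};\cdot,\cdot]$ evaluated at critical points, so changes of branch are absorbed into the $\BZ^2$-deck action \eqref{eq.pqw} compatibly with \eqref{eq.ex5term}.

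For part (b), the commutative triangle asserts $e^{-V_{\ft}(z)}=e^{\frac{1}{2\pi i}\hat{R}(\hat{\b}_{\ft}(w))}$, equivalently (up to $\BZ(2)$ and the exponential normalization) that $\hat{R}(\hat{\b}_{\ft}(w))\equiv -2\pi i\,V_{\ft}(z)$. I would prove this by expanding $\hat{R}$ via \eqref{eq.hatrogers} on each summand of $\hat{\b}_{\ft}(w)$ and comparing against $V_{\ft}(z)$ as defined in \eqref{eq.potential}. Each term $\e_j\hat{R}(z^{A_j};\dots)$ produces $\e_j\Li_2(z^{A_j})$ together with the product-of-logarithms correction, and the definition \eqref{eq.Phi} of $\Phi$ is engineered exactly so that $\e_j\Phi(z^{A_j})$ matches the dilogarithm part after dividing by $2\pi i$; the quadratic term $\frac{1}{2\pi i}Q(\Log z)$ and the $\Log\e\cdot\Log(z^L)$ term must then be matched against the two $[z^{-L/2};\cdot,\cdot]$ symbols together with the branch-correction products in $\hat{R}$. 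This is the step where I expect the most delicate bookkeeping, because I must confirm not merely equality of analytic functions but equality modulo $\BZ(2)$ after exponentiation, and I must use the critical-point equations \eqref{eq.varlog} to eliminate the linear $\Log(z_i)$ terms that appear before the potential is evaluated at a critical point. The cleanest route is to differentiate: show the two sides have equal derivatives on $\calD_{\ft}$ using \eqref{eq.derL}, so they differ by a locally constant function, and then pin down the constant on the distinguished component of $\hatFT$ containing the real points $\FT_0$, where $L(z)$ reduces to the classical Rogers dilogarithm and all branches vanish.

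For part (c), once (b) is established the finiteness of $\CV_{\ft}=\{e^{-V_{\ft}(z)}\}$ follows from exactly the argument indicated at the close of the proof of Theorem~\ref{thm.0}: the critical set $\hatCP$ is the inverse image, under the projection \eqref{eq.exp}, of the complex points $X_{\ft}$ of an affine scheme defined over $\BQ$ by the Variational Equations, and such a complex variety has finitely many connected components. Since $V_{\ft}$ is analytic (Lemma~\ref{lem.domainVt}) and its critical points form a subset on which $e^{-V_{\ft}}$ is constant on each connected component — because an analytic function is locally constant on a connected component of its critical set — the image $\CV_{\ft}$ is finite. That the values lie in $e^{\calP}$ is the content of part (b) combined with the Kontsevich--Zagier period interpretation of $\hat{R}$ already recorded in Theorem~\ref{thm.0}(b) and Remark~\ref{rem.rogers0}; I would simply note that $\hat{R}$ of an element of $\hatBC$ is a period of weight $2$, so its exponential lies in $e^{\calP}$, and then read off the well-defined map \eqref{eq.finite}. \emph{The main obstacle} is the branch-tracking in part~(b): every $\pi i$-ambiguity in the logarithms must be reconciled so that the two $[z^{-L/2};\cdot,\cdot]$ symbols, the $\Log\e$ factor, and the half-integer shift \eqref{eq.sqrt} conspire to give an \emph{equality modulo} $\BZ(2)$ rather than merely up to an uncontrolled integer multiple of $2\pi i$; the derivative-plus-basepoint argument is what I expect to make this rigorous.
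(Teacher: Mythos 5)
Your overall architecture coincides with the paper's: part (a) is the symbol computation of Theorem \ref{thm.0} redone under $\hat\nu$ in $\BC\wedge\BC$, using \eqref{eq.LogA}, the Logarithmic Variational Equations \eqref{eq.varlog}, skew-symmetry of $\wedge$ against the symmetric form $Q$, and \eqref{eq.sqrt} to kill the residual wedge with $\Log\e$; part (c) is the constancy-on-components argument already sketched at the end of Section \ref{sec.pfthm0}. For part (b), the first route you describe --- expand $\hat R$ on each summand of $\hat\b_{\ft}$ via \eqref{eq.hatrogers}, match $\e_j\Phi(z^{A_j})$ against the dilogarithm part, and cancel the leftover log-products against $Q(\Log z)$ and the two $[z^{-L/2};\cdot,\cdot]$ symbols using \eqref{eq.varlog} and \eqref{eq.sqrt} --- is exactly what the paper does: it writes $-V_{\ft}(z)=\tfrac{1}{2\pi i}\hat R(\hat\b_{\ft}(z))+T_1+T_2$ and proves $T_1+T_2=0$ at critical points.

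The step you single out as ``the cleanest route'' and as what you ``expect to make this rigorous'' --- showing the two sides of $\hat R(\hat\b_{\ft}(w))\equiv -2\pi i\,V_{\ft}(w)$ have equal derivatives on $\calD_{\ft}$ and then fixing a constant of integration --- would fail, and this is a genuine gap in your plan for (b). The identity is \emph{not} an identity of analytic functions on $\calD_{\ft}$: the discrepancy $T_1+T_2$ contains the terms $-\tfrac{1}{4\pi i}\sum_j\e_j\Log(z^{A_j})\Log(1-z^{A_j})$ and $-\tfrac{1}{2\pi i}Q(\Log z)$, whose cancellation uses the critical-point equations \eqref{eq.varlog}; off the critical locus it is a genuinely nonconstant function. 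Equivalently, $2\pi i\,\Phi'(x)=\Log(1-x)/x$ does not agree with $-\partial_x\hat R(x;p,0)$, the difference being a nonzero combination of $\Log(1-x)/x$ and $(\Log(x)+\pi i p)/(1-x)$. Since $\hatCP$ is generically zero-dimensional there is no open set along which to differentiate, and the ``distinguished component of $\hatFT$'' is the lifted five-term locus in $\hat\BC^5$, not a basepoint for $\calD_{\ft}$, so it cannot normalize any constant. You should carry out the direct expansion you describe first and drop the derivative-plus-basepoint argument. Two smaller corrections to part (a): there is no issue of $\hat\b_{\ft}$ ``respecting the five-term relation'' --- it is defined symbol by symbol into $\widehat{\calP(\BC)}$, and the entire content of (a) is that its image lies in $\Ker\hat\nu$; and $T_1^{p}$ shifts only the $p$-coordinate, so no $q_{A_j}\pi i$ term appears when you apply $\hat\nu$ to those symbols.
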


\begin{example}
\lbl{ex.continue2}
Let us continue with the Example \ref{sub.example}. When $a=-1$, $b=2$
and $\e=-1$, the Variational Equations \eqref{eq.abe2} have solutions
$$
X_{\ft}=\{z^{\pm 1}\, | \, z=e^{2 \pi i/6}\}.
$$
In that case, the corresponding elements of the extended Bloch group are

\begin{equation*}
2[z^{\pm 1};0,0]+[z^{-1/2};0,2]-[z^{-1/2};0,0],
\end{equation*}
and their values under the map $\hat{R}$ are
given by:

\begin{equation*}
0 \pm  i \, 2.0298832128193074\dots 
\end{equation*}
whose imaginary part equals to the {\em Volume} $\text{Vol}(4_1)$
of the $4_1$ knot; see \cite{Th}.
Moreover,

\begin{eqnarray*}
\CV_{\ft} &=& \{e^{\pm \frac{1}{2 \pi}\text{Vol}(4_1)} \} \\
&=& \{ 0.7239261119 \dots, 1.3813564445 \dots \}.
\end{eqnarray*}
\end{example}

\section{Proof of Theorem \ref{thm.1}}
\lbl{sec.proofs}

In this section we will give the proofs of Proposition 
\ref{prop.criticalV} and Theorem \ref{thm.1}.

\subsection{Proof of Proposition \ref{prop.criticalV}}
\lbl{sub.prop.criticalV}

It suffices to show that for every $i=0,\dots,r$, 
$2 \pi \sqrt{-1} z_i \frac{\pt V_{\ft}}{\pt z_i}$
is given by the left hand side of Equation \eqref{eq.varlog}.
This follows easily from the definition of $V_{\ft}$ and the elementary
computation:

\begin{equation}
\lbl{eq.Phi'}
\Phi'(x)= \frac{1}{2 \pi i} \frac{\Log(1-x)}{x}.
\end{equation}

\subsection{Proof of Theorem \ref{thm.1}}
\lbl{sub.thm.1}

The proof is similar to the proof of Theorem \ref{thm.0}, once we keep
track of the branches of the logarithms. 
Fix a \qterm\ $\ft$ and consider the map given by \eqref{eq.hatbetab}.
Without loss of generality,
we assume that 
$$
Q(k)=\frac{1}{2}\sum_{i,j=0}^r k_i k_j
$$
Let us fix $z=(z_0,\dots,z_r)$ that satisfies the Variational Equations
\eqref{eq.var}. 
We will show that $\hat\nu(\hat{\b}_{\ft}(z))=0 \in \BC \wedge \BC$, where 
$\hat\nu$ is given by \eqref{eq.hatnu}.  
We have
$$
\hat{\b}_{\ft}(z)=\hat{\b}_{1,\ft}(z)+\hat{\b}_{2,\ft}(z)
$$
where
\begin{eqnarray*}
\hat{\b}_{1,\ft}(z) &=&
\sum_{j=1}^{J} \e_j [T_1^{p_{z,A_j}}(z^{A_j})] \\
\hat{\b}_{2,\ft}(w) &=&
[z^{-L/2};0,2 \frac{\Log\e}{\pi i}]-[z^{-L/2};0,0].
\end{eqnarray*}
It follows that

\begin{eqnarray*}
\hat\nu(\hat{\b}_{\ft}(z)) & = & \hat\nu(\hat{\b}_{1,\ft}(z))
+ \hat\nu(\hat{\b}_{2,\ft}(z)) 
\end{eqnarray*}
where
\begin{eqnarray*}
\hat\nu(\hat{\b}_{1,\ft}(z)) & = &
\sum_{j=1}^{J} \e_j (\Log( z^{A_j})+p_{z,A_j} \pi i) 
\wedge (-\Log(1-z^{A_j})) \\
\hat\nu(\hat{\b}_{2,\ft}(z)) &=& 
\Log(z^{-L/2}) \wedge  2 \Log\e \\
&=&  2 \Log(z^{-L/2}) \wedge \Log\e.
\end{eqnarray*}

On the other hand, using Equation \eqref{eq.LogA}, 
we have:

\begin{eqnarray*}
(\Log( z^{A})+p_{z,A} \pi i) \wedge \Log(1-z^{A}) 
&=& \sum_{i=0}^r v_i(A) ( \Log(z_i) \wedge \Log(1-z^{A})) \\
&=& \sum_{i=0}^r \Log(z_i) \wedge ( v_i(A) \Log(1-z^{A})).
\end{eqnarray*}

\noindent
Since $z$ satisfies the Logarithmic Variational Equations 
\eqref{eq.varlog}, after we interchange the $j$ and $i$ summation, 
we obtain that:

\begin{eqnarray*}
\hat\nu(\hat{\b}_{1,\ft}(z)) & = & \sum_{i=0}^r \sum_{j=1}^{J}   
\Log(z_i) \wedge (-\e_j v_i(A_j) \Log(1-z^{A_j})) \\
&=& \sum_{i=0}^r  \Log( z_i) \wedge ( \sum_{j=1}^{J} 
-\e_j v_i(A_j) \Log(1-z^{A_j}) ) \\
& = & \sum_{i=0}^r  \Log( z_i) \wedge (
\frac{\pt Q}{\pt z_i}(\Log(z)) + \Log \e \cdot v_i(L)). 
\end{eqnarray*}
Since $Q$ is an integral symmetric bilinear form and $\wedge$ is 
skew-symmetric, it follows that

\begin{eqnarray*}
\sum_{i=0}^r  \Log( z_i) \wedge (
 \frac{\pt Q}{\pt z_i}(\Log(z)) &=&
0.
\end{eqnarray*}
Moreover,

\begin{eqnarray*}
\sum_{i=0}^r  \Log( z_i) \wedge (
\Log \e \cdot v_i(L)) &=& 
\sum_{i=0}^r v_i(L) \Log(z_i) \wedge \Log \e \\
&=& ( \Log(z^L)+ p_{z,L} \pi i) \wedge \Log \e \\
&=&  \Log(z^L) \wedge \Log \e.
\end{eqnarray*}

Thus,
$$
\hat\nu(\hat{\b}_{1,\ft}(z)) = \Log(z^L) \wedge \Log \e
$$
which implies that
$$
\hat\nu(\hat{\b}_{\ft}(z))=(2 \Log(z^{-L/2})+\Log(z^L)) \wedge \Log \e.
$$ 
On the other hand, reducing Equation 
\eqref{eq.sqrt} modulo $\pi i \BZ$, and using \eqref{eq.LogA}
we have:

\begin{eqnarray*}
0 &=& -\Log(z^L)-\Log(z^{-L/2})+\frac{1}{2} \sum_{i=0}^r v_i(L) \Log(z_i)
\\
&=& -\Log(z^L)-\Log(z^{-L/2})+\frac{1}{2} (\Log(z^L) +\pi i  p_{z,L})
\\
&=&
-\frac{1}{2} (\Log(z^L)+2 \Log(z^{-L/2})) + \frac{\pi i  p_{z,L}}{2} 
\\
&=&
-\frac{1}{2} (\Log(z^L)+2 \Log(z^{-L/2})).
\end{eqnarray*}
In other words, we have:

\begin{equation}
\lbl{eq.sqrt2}
\Log(z^L)+2 \Log(z^{-L/2}) \in \frac{\pi i}{2} \BZ.
\end{equation}

Since $\Log\e \in \pi i \BZ$, it follows that $\hat\nu(\hat{\b}_{\ft}(z))=0$
and concludes part (a) of Theorem \ref{thm.1}.

For part (b), observe first that for every $w \in \hat{\BC}$ and 
every even integers $p$ and $q$ we have:

\begin{equation*}
\hat{R}([T_1^p T_0^q (w)])=\hat{R}([w])+ \frac{\pi i}{2} 
(q \Log(z)+p \Log(1-z)).
\end{equation*}

Moreover, by the definition of $\Phi$ and by Equation \eqref{eq.LogA},
for any integral linear form $A=A_j$ for 
$j=1,\dots, J$ or $A=L$, we have:

\begin{eqnarray*}
-\Phi(z^A) &=& \frac{1}{2 \pi i} \left(\hat{R}([T_1^{p_{z,A}} (z^{A})]) 
-\frac{1}{2} \Log(z^{A}) \Log(1-z^{A}) -\frac{\pi i}{2} p_{z,A} \Log(1-z^A)
\right).
\end{eqnarray*}
Thus, using the definition of the potential function from Equation 
\eqref{eq.potential}, we obtain that:  

\begin{equation*}
-V_{\ft}(z) =   \frac{1}{2 \pi i} R(\hat{\b}_{\ft}(z)) + 
T_1 +T_2
\end{equation*}
where
\begin{eqnarray*}
T_1 &=& \frac{1}{2\pi i}\left(
-\frac{1}{2} \sum_j \e_j \Log(z^{A_j})\Log(1-z^{A_j}) -\frac{\pi i}{2} 
p_{z,A_j} \Log(1-z^{A_j}) \right) \\
T_2 &=& -\frac{1}{2 \pi i} Q(\Log(z)) -\frac{1}{2 \pi i}\Log\e \cdot 
\Log(z^L) 
-\frac{1}{2 \pi i} ( \hat{R}([z^{-L/2};0, \frac{2 \Log\e}{\pi i}]-
[z^{-L/2};0,0]) \\
&=& -\frac{1}{2 \pi i} Q(\Log(z)) -\frac{1}{2 \pi i}\Log\e \cdot \Log(z^L) 
-\frac{\Log\e}{2 \pi i} \Log(z^{-L/2}). 
\end{eqnarray*} 

Using \eqref{eq.LogA}, and interchanging $i$ and $j$ summation, and
using the Logarithmic Variational Equations \eqref{eq.varlog}, 
it follows that: 

\begin{eqnarray*}
\sum_{j=0}^J \e_j \Log(z^{A_j}) \Log(1-z^{A_j}) & = &
\sum_{j=0}^J
(\sum_{i=0}^r \e_j v_i(A_j) \Log(z_i) -p_{z,A_j} \pi i) \Log(1-z^{A_j}) \\
& = & \sum_{i=0}^r \Log(z_i) \sum_{j=0}^J  \e_j v_i(A_j) \Log(1-z^{A_j}) 
-\sum_{j=0}^J \e_j p_{z,A_j} \pi i \Log(1-z^{A_j}) \\
&=&
\sum_{i=0}^r \Log(z_i) (- \frac{\pt Q}{\pt z_i} -\Log\e \cdot v_i(L))
-\sum_{j=0}^J \e_j p_{z,A_j} \pi i \Log(1-z^{A_j}).
\end{eqnarray*}

\noindent
Thus,

\begin{eqnarray*}
T_1 &=& \frac{1}{4 \pi i} \sum_{i=0}^r \Log(z_i) \frac{\pt Q}{\pt z_i}(\Log(z))
+\frac{\Log\e}{4 \pi i} \sum_{i=0}^r v_i(L) \Log(z_i).
\end{eqnarray*}

\noindent
Since $Q$ is a symmetric bilinear form, it follows that 
$$
\frac{1}{2}
\sum_{i=0}^r \Log(z_i) \frac{\pt Q}{\pt z_i}(\Log(z)) - Q(\Log(z))=0.
$$
This, together with Equation \eqref{eq.sqrt} implies that $T_1+T_2=0$. 
Exponentiating, it follows that
$$
e^{-V_{\ft}(z)}=e^{\frac{1}{2 \pi i} \hat{R}(\hat{\b}_{\ft}(z))} \in \BC^*
$$
which concludes part (b) of Theorem \ref{thm.1}.

For part (c), since an analytic function is constant on a connected 
component of its sets of critical points, and since the set of critical
points $\hatCP$ is the set of complex points of an affine variety 
defined over $\BQ$, it follows that $\hatCP$ has finitely many connected
components. Thus, $\CV_{\ft}$ is a finite subset of $\BC$. Moreover,
since the value of the map \eqref{eq.regulator} 
at a point $[z;p,q]$ with $z \in \overline\BQ$
is a period, in the sense of Kontsevich-Zagier, and since every connected
component of $\hatCP$ has a point $w$ so that $z=e^{2 \pi i w} \in 
(\overline\BQ)^{r+1}$ (due to the Variational Equations \eqref{eq.var}), it
follows that $\CV_{\ft}$ is a subset of $e^{\calP}$. This concludes
the proof of Theorem \ref{thm.1}.
\qed

\section{\Sqterm s, generating series and singularities}
\lbl{sec.gseries}

The second part of the paper assigns to germs of analytic functions
$\Lnp_{\ft}(z)$ and $\Lp_{\ft}(z)$ to a \sqterm, and formulates
a conjecture regarding their analytic continuation in the complex plane
minus a set of singularities related to the image of the composition
$\hat{R} \circ \hat{\b}_{\ft}$.

\subsection{What is a \sqterm ?}
\lbl{sub.sqterm}

In this section we introduce the notion of a \sqterm. 
Of course, every \sqterm\ is a \qterm . Examples of \sqterm s that come
naturally from Quantum Topology are discussed in Section \ref{sub.exspecial}
below.

\begin{definition}
\lbl{def.sqterm}
A {\em special $q$-hypergeometric term} $\ft$ (in short, 
{\em \sqterm}) in the $r+1$ variables $k=(k_0,k_1,\dots,k_r) \in \BN^r$
is a list that consists of
\begin{itemize}
\item
An integral symmetric quadratic form $Q(k)$ in $k$,
\item
An integral linear form $L$ in $k$ and a vector $\e=(\e_0,\dots,\e_r)$
with $\e_i=\pm 1$ for all $i$,
\item
Integral linear forms $B_j,C_j,D_j,E_j$ in $k$ for $j=1,\dots,J$
\end{itemize}
The list $\ft$ satisfies the following condition. Its {\em Newton polytope}
$P_{\ft}$ defined by
\begin{equation}
\lbl{eq.Pft}
P_{\ft}=\{ w \in \BR^r \,|  \, B_j(1,w) \geq C_j(1,w) \geq 0, \,\,
\,\,
D_j(1,w) \geq E_j(1,w) \geq 0, \,\, j=1,\dots, J\} 
\end{equation}
is a nonempty rational compact polytope of $\BR^r$. 
\end{definition}
This accurate but rather obscure definition is motivated from the
fact that 
a \sqterm\ $\ft$ gives rise to an expression of the form
\begin{equation}
\lbl{eq.sqterm}
\ft_{k}(q)=q^{Q(k)} \e^{L(k)} \prod_{j=1}^J 
\qbinom{B_j(k)}{C_j(k)} \frac{(q)_{D_j(k)}}{(q)_{E_j(k)}} \in \BZ[q^{\pm 1}]
\end{equation}
valid for $k \in \BN^{r+1}$ such that $B_j(k) \geq C_j(k) \geq 0$ and $D_j(k)
\geq E_j(k) \geq 0$ for all $j=1,\dots,J$.

\subsection{From \sqterm s to generating series}
\lbl{sub.sgen}

We now discuss how a \sqterm\ $\ft$ gives rise to a sequence of Laurent
polynomials $(a_{\ft,n}(q))$ and to a generating series $G_{\ft}(z)$.
Given a \sqterm\ $\ft$ in $r+1$ variables $k=(k_0,\dots,k_r)$, it will
be convenient to single out the first variable $k_0$, and denote it by $n$
as follows:
\begin{equation}
\lbl{eq.k0}
n=k_0, \qquad k'=(k_1,\dots,k_r)
\end{equation}
With the above convention, we have $k=(n,k')$. 

\begin{definition}
\lbl{def.aftn}
A \sqterm\ $\ft$ gives rise
to a sequence of Laurent polynomials $(a_{\ft,n}(q))$ as follows:
\begin{equation}
\lbl{eq.an}
a_{\ft,n}(q)=\sum_{k' \in n P_{\ft} \cap \BN^r} \ft_{n,k'}(q) \in \BZ[q^{\pm 1}]
\end{equation}
where the summation is over the finite set $n P_{\ft} \BN^r$ of lattice points
of the translated Newton polytope of $\ft$.
\end{definition}

\begin{definition}
\lbl{def.Gft}
A \sqterm\ $\ft$ gives rise to a power series $\Lnp_{\ft}(z)$ defined by:
\begin{equation}
\lbl{eq.ahabiro}
\Lnp_{\ft}(z)=\sum_{n=0}^\infty a_{\ft,n}(e^{\frac{2 \pi i}{n}})z^n. 
\end{equation}
\end{definition}

The next lemma proves that $\Lnp_{\ft}(z)$ is an analytic function at $z=0$.

\begin{lemma}
\lbl{lem.an1}
For every \sqterm\ $\ft$, $\Lnp_{\ft}(z)$ is analytic at $z=0$.
\end{lemma}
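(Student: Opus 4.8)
I need to show that the power series
$$
\Lnp_{\ft}(z)=\sum_{n=0}^\infty a_{\ft,n}(e^{2\pi i/n})\, z^n
$$
is analytic at $z=0$, which by a standard criterion amounts to showing that the coefficients $a_{\ft,n}(e^{2\pi i/n})$ grow at most geometrically in $n$; that is, there is a constant $C>0$ with $|a_{\ft,n}(e^{2\pi i/n})| \le C^n$ for all $n$.

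**Structure of the argument.** Let me lay out the plan in three movements.

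First, I would control the number of summands. By Definition \ref{def.aftn}, $a_{\ft,n}(q)$ is a sum over the lattice points $k'\in nP_{\ft}\cap\BN^r$, where $P_{\ft}$ is the fixed compact rational polytope of Definition \ref{def.sqterm}. Since $P_{\ft}$ is compact, the dilated polytope $nP_{\ft}$ has volume growing like $n^r$, and the number of lattice points in $nP_{\ft}\cap\BN^r$ is therefore bounded by a polynomial in $n$ (indeed it is asymptotically the Ehrhart polynomial). A polynomial factor is harmless for geometric growth, so it suffices to bound each individual term $|\ft_{n,k'}(e^{2\pi i/n})|$ uniformly in $k'\in nP_{\ft}\cap\BN^r$ by a geometric factor $C^n$.

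Second, I would bound a single term. Fix $k=(n,k')$ with $k'\in nP_{\ft}\cap\BN^r$. From \eqref{eq.sqterm}, $\ft_k(q)$ is a product of a monomial $q^{Q(k)}\e^{L(k)}$, of $q$-binomials $\qbinom{B_j(k)}{C_j(k)}$, and of ratios $(q)_{D_j(k)}/(q)_{E_j(k)}$. At $q=e^{2\pi i/n}$ the monomial factor has modulus $1$. For the remaining factors I would use the elementary estimate for $q$-factorials at a root of unity: the key input is that $|(q)_m|$ for $q=e^{2\pi i/n}$ is bounded above and, away from $m\equiv 0$, below by quantities governed by $\Phi$, precisely the growth rate described in Lemma \ref{lem.qfactorials} of Section \ref{sub.motivation}. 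Concretely, $\log|(e^{2\pi i/n})_m|$ is of order $n$ times a bounded function of $m/n$, so each $q$-binomial and each ratio contributes a factor that is at most exponential in $n$ with a rate bounded uniformly as $k'/n$ ranges over the compact set $P_{\ft}$. Multiplying the finitely many ($2J$) such factors keeps the bound geometric, $|\ft_{n,k'}(e^{2\pi i/n})|\le C^n$, with $C$ depending only on $\ft$.

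**The main obstacle.** The delicate point is not the upper bound on the numerator factors but the \emph{denominator} factors $1/(q)_{E_j(k)}$: at $q=e^{2\pi i/n}$ the $q$-factorial $(q)_m$ vanishes whenever $n\mid m$ (since $1-q^n=0$), so individual terms can genuinely blow up. The defining inequalities of the Newton polytope, $B_j\ge C_j\ge 0$ and $D_j\ge E_j\ge 0$, are exactly what is needed to see that these potential poles cancel: they guarantee that each factor is a genuine $q$-binomial or a ratio in which the denominator divides the numerator, so that $\ft_k(q)\in\BZ[q^{\pm1}]$ as asserted in \eqref{eq.sqterm} and the evaluation at a root of unity is finite. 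I would therefore argue at the level of the \emph{ratio} $\qbinom{B_j}{C_j}$ and $(q)_{D_j}/(q)_{E_j}$ rather than factor by factor, counting the order of vanishing of numerator and denominator at each $n$-th root of unity and checking the numerator order dominates; the valuation bookkeeping, uniform over the compact polytope, is the part requiring care. Granting this cancellation, the geometric bound on $a_{\ft,n}(e^{2\pi i/n})$ follows, and hence $\Lnp_{\ft}(z)$ has positive radius of convergence, i.e. is analytic at $z=0$.
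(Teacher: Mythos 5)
Your reduction to a per-term bound $|\ft_{n,k'}(e^{2\pi i/n})|\le C^n$ via the polynomial count of lattice points in $nP_{\ft}\cap\BN^r$ is exactly the paper's first step, but the way you propose to get the per-term bound contains a genuine gap, and is also harder than necessary. First, the ``main obstacle'' you flag is a red herring: there are no poles to cancel, because for $c\ge d\ge 0$ the ratio $(q)_c/(q)_d$ is \emph{identically} the polynomial $\prod_{j=d+1}^{c}(1-q^j)$, and $\qbinom{a}{b}$ is a polynomial with nonnegative integer coefficients, so every factor of \eqref{eq.sqterm} is a Laurent polynomial before any evaluation. Second, and more seriously, your actual plan --- evaluate factor by factor at $q=e^{2\pi i/n}$, invoke the growth rate of Lemma \ref{lem.qfactorials}, and check that orders of vanishing of numerator and denominator cancel --- does not deliver the quantitative estimate. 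Cancellation of zeros only shows the value is \emph{finite}; to conclude $\le C^n$ along that route you would need a uniform \emph{lower} bound $|(e^{2\pi i/n})_{E_j(k)}|\ge c^n$, and the naive factor-wise bound $|1-e^{2\pi i j/n}|\ge|1-e^{2\pi i/n}|\sim 2\pi/n$ yields only $e^{-cn\log n}$, which destroys geometric growth. Moreover Lemma \ref{lem.qfactorials} is an asymptotic for $m=[\a N]$ with $\a\in(0,1)$ fixed, with no claimed uniformity as $\a\to 0,1$, and the linear forms $B_j(k),D_j(k)$ may exceed $n$, where $(e^{2\pi i/n})_m$ vanishes identically. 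Your closing ``granting this cancellation, the geometric bound follows'' is precisely the step that is missing.

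The paper's proof sidesteps all root-of-unity analysis with a one-line device: let $\|f\|_1$ be the sum of absolute values of the coefficients of a Laurent polynomial $f$. This norm is submultiplicative and satisfies $|f(q_0)|\le\|f\|_1$ for $|q_0|=1$; combined with $\|\qbinom{a}{b}\|_1=\binom{a}{b}\le 2^a$ and $\|(q)_c/(q)_d\|_1\le 2^{c-d}$, it gives $\|\ft_{n,k'}(q)\|_1\le C^n$ uniformly over $k'\in nP_{\ft}\cap\BN^r$, and hence the required bound on $|a_{\ft,n}(e^{2\pi i/n})|$. If you wish to keep your evaluation-based route, you must actually supply the uniform lower bound on the denominator factors, which is substantially more delicate than the statement warrants.
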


\begin{proof}
Fix a \sqterm\ $\ft$ as Definition \eqref{eq.sqterm}. Let $||f(q)||_1$
denote the sum of the absolute values of the coefficients of a Laurent
polynomial $f(q) \in \BQ[q^{\pm 1}]$. It suffices to show that there exists
$C>0$ so that
 
\begin{equation}
\lbl{eq.norm1}
||a_{\ft,n}(q)||_1 \leq C^n
\end{equation}
for all $n$. In that case, since $|e^{2 \pi i/n}|=1$, it follows that
$$
|a_{\ft,n}(e^{\frac{2 \pi i}{n}})| \leq C^n
$$
for all $n$, thus $\Lnp_{\ft}(z)$ is analytic for $|z|<1/C$. Now,
recall that for natural numbers $a,b$ with $a\geq b \geq 0$ we have:
$$
\qbinom{a}{b} \in \BN[q^{\pm 1}], \qquad ||\qbinom{a}{b}||_1=\binom{a}{b}
\leq 2^a.
$$
(see for example, \cite{St}). In addition for natural numbers 
$c,d$ with $c \geq d \geq 0$ we have:
$$
\frac{(q)_c}{(q)_d}=\prod_{j=d+1}^c (1-q^j),
\qquad
||\frac{(q)_c}{(q)_d}||_1 \leq 2^{c-d} \leq 2^c.
$$
Equation \eqref{eq.an} implies that
$$
||a_{\ft,n}(q)||_1 \leq \sum_{k' \in n P_{\ft} \cap \BN^r} ||\ft_{n,k'}(q)||_1
$$
Since the number of terms in the above sum is bounded by a polynomial
function of $n$, and the summand is bounded by an exponential function
of $n$, Equation \eqref{eq.norm1} follows.
\end{proof}

\subsection{Examples of \sqterm s from Quantum Topology}
\lbl{sub.exspecial}

Quantum Topology gives a plethora of \sqterm s $\ft$ to knotted 3-dimensional
objects whose corresponding sequence $\a_{\ft,n}(q)$ depends on the knotted
object itself. A concrete example is given in \cite[Sec.3]{GL1}. 
To state it, let $J_{K,n}(q) \in \BZ[q^{\pm 1}]$ denote the {\em colored
Jones polynomial} of a {\em knot} $K$, colored by the $n$ dimensional
irreducible representation of $\mathfrak{sl}_2$ and normalized to equal
to $1$ at the unknot; see \cite{Tu}. 
In \cite[Lem.3.2]{GL1} the following is shown. 

\begin{lemma}
\lbl{lem.braid}
Let $\b$ denote a braid whose closure is a knot $K$. Then, there exists a 
\sqterm\ $\ft_{\b}$ such that
\begin{equation}
J_{K,n}(q)=a_{\ft_{\b},n}(q)
\end{equation}
for all $n \in \BN$. It follows that $a_{\ft_{\b},n}(e^{\frac{2 \pi i}{n}})$ is the
$n$-th {\em Kashaev invariant} of $K$; \cite{Ka}.
\end{lemma}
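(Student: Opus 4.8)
The plan is to produce $\ft_{\b}$ directly from the standard $R$-matrix state sum for the colored Jones polynomial, following the construction in \cite[Sec.3]{GL1}. First I would fix a braid presentation: write $K$ as the closure of a braid $\b$ on $s$ strands, with a prescribed word in the positive and negative elementary generators. Recall (see \cite{Tu}) that $J_{K,n}(q)$ is computed by labelling each arc of the braid diagram between consecutive crossings by a basis vector $v_m$ of the standard basis $\{v_0,\dots,v_{n-1}\}$ of the $n$-dimensional irreducible $U_q(\mathfrak{sl}_2)$-module, inserting at each crossing the corresponding matrix element of the (framing-corrected) $R$-matrix or its inverse, contracting indices along the strands, and summing over all admissible states. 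Because $K$ is a knot, the closure is connected and the whole evaluation is a single finite multisum.

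Next I would record the explicit entries of the $R$-matrix in this basis. Each nonzero entry is, up to an overall monomial in $q^{\pm 1/2}$ and a sign, a quantum binomial coefficient whose arguments are integral linear forms in the local arc labels together with an internal summation index running over a range of the shape $0\le m\le\min(\cdot,\cdot)$. Multiplying the contributions of all crossings, the summand of the state sum becomes a product of quantum binomials and ratios $(q)_{D}/(q)_{E}$ in integral linear forms of the state variables, times $q$ raised to a quadratic form $Q$ (collecting the monomial prefactors and the writhe/framing correction) and a sign $\e^{L}$. Passing from balanced quantum binomials to the unbalanced $\qbinom{\cdot}{\cdot}$ used in \eqref{eq.sqterm} changes $Q$ and $L$ only by integral quantities, and the half-integer powers of $q$ cancel precisely because $K$ is a knot and $J_{K,n}$ is normalized to $1$ at the unknot, so the summand lands in $\BZ[q^{\pm 1}]$ and has exactly the shape \eqref{eq.sqterm}.

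Then I would organize the indices. The representation dimension $n$ plays the role of $k_0$, while the arc labels and the internal $R$-matrix summation indices become $k'=(k_1,\dots,k_r)$. Each such index is bounded by a fixed integer multiple of $n$ (labels lie in $\{0,\dots,n-1\}$, and each internal index is bounded by differences of labels), and these are exactly the inequalities $B_j(1,w)\ge C_j(1,w)\ge 0$ and $D_j(1,w)\ge E_j(1,w)\ge 0$ that cut out $P_{\ft}$. Hence the state summation is over $nP_{\ft}\cap\BN^r$ and $J_{K,n}(q)=a_{\ft_{\b},n}(q)$ in the sense of \eqref{eq.an}.

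The main obstacle is the verification in the second and third steps: that after multiplying the $R$-matrix entries over all crossings the summand genuinely reassembles into \eqref{eq.sqterm} with \emph{integral} linear forms, and --- the geometric heart of the matter --- that the associated Newton polytope $P_{\ft}$ is compact, nonempty and rational. Compactness reflects the finiteness of the state sum and uses crucially that we took the closure of a braid, producing a knot rather than a general tangle; nonemptiness is immediate since the all-zero state contributes. Once the identity $J_{K,n}(q)=a_{\ft_{\b},n}(q)$ of Laurent polynomials is established, the final assertion is automatic: by definition the $n$-th Kashaev invariant of $K$ is the evaluation $J_{K,n}(e^{2\pi i/n})$ (see \cite{Ka}), so substituting $q=e^{2\pi i/n}$ gives $a_{\ft_{\b},n}(e^{2\pi i/n})=J_{K,n}(e^{2\pi i/n})$, which is the Kashaev invariant.
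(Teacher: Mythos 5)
Your proposal is correct and follows essentially the same route as the paper, which offers no proof of its own but simply cites \cite[Lem.3.2]{GL1}; your sketch is a faithful reconstruction of that $R$-matrix state-sum argument, including the key points (integrality of the linear forms after clearing half-integer powers via the writhe correction, compactness of the Newton polytope from the finiteness of the state sum, and the identification of $J_{K,n}(e^{2\pi i/n})$ with the Kashaev invariant).
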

Please observe that the \sqterm\ $\ft_{\b}$ depends on the braid $\b$,
and that a fixed knot $K$ can always be obtained as the closure of infinitely
many braids $\b$. Nonetheless, for all such braids $\b$, the sequence
of polynomials $(a_{\ft_{\b},n}(q))$ depend only on the knot $K$.

\begin{remark}
\lbl{rem.computer}
A computer implementation of Lemma \ref{lem.braid} is available
from \cite{B-N}. It uses as input a braid word in the standard generators
of the braid group, and outputs the expression \eqref{eq.sqterm}
of the corresponding \sqterm .
\end{remark}

Quantum Topology constructs 
many more examples of \sqterm s, that depend on a pair $(K,\mathfrak{g})$ 
of a knot $K$ and a simple Lie algebra $\mathfrak{g}$, or to a pair
$(M,\mathfrak{g}$) of a closed 3-manifold $M$ with the integer
homology of $S^3$ and a simple Lie algebra $\mathfrak{g}$.

\subsection{An ansatz for the singularities of $\Lnp_{\ft}(z)$}
\lbl{sub.ansatz}

In this section we connect the two different parts of the paper. Namely,
a \sqterm\ $\ft$ gives rise to 
\begin{itemize}
\item[(a)]
a finite set $\CV_{\ft} \subset \BC^*$ 
from Definition \ref{def.hatSE},
\item[(b)] 
a generating series $\Lnp_{\ft}(z)$ from Definition \ref{def.Gft}.
\end{itemize}

\begin{conjecture}
\lbl{conj.1}
For every \sqterm\ $\ft$, the germ $\Lnp_{\ft}(z)$ has an analytic 
continuation as a multivalued function in $\BC\setminus (\CV_{\ft} \cup \{0\})$.
Moreover, the local monodromy of  $\Lnp_{\ft}(z)$ is {\em quasi-unipotent},
i.e., its eigenvalues are complex roots of unity.
\end{conjecture}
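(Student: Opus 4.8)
The conjecture is an all-orders refinement of the classical principle that the exponential growth rate of a sequence pins down the location of the singularities of its generating series. The plan is therefore to first extract a complete asymptotic expansion of the coefficients $a_{\ft,n}(e^{\frac{2\pi i}{n}})$ as $n\to\infty$, and then to reconstruct the analytic continuation of $\Lnp_{\ft}(z)$ from the structure of that expansion by Borel--Laplace (resurgence) methods. Since Lemma \ref{lem.an1} already supplies analyticity at $z=0$, all of the content lies in the global continuation and in matching the singular locus to $\CV_{\ft}$.

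First I would analyze a single summand. Writing $k'=nw$ with $w\in P_{\ft}$ and $q=e^{\frac{2\pi i}{n}}$, the growth of the $q$-factorials at the root of unity $q$ is governed by the function $\Phi$ of Definition \ref{def.Phi}; combining this with the phases coming from $q^{Q(k)}$ and $\e^{L(k)}$ shows that $\frac{1}{n}\log\ft_{n,nw}(e^{\frac{2\pi i}{n}})$ converges to a holomorphic potential whose critical points correspond, under $w\mapsto e^{2\pi i w}$, exactly to the set $\hatCP$ of Proposition \ref{prop.criticalV}, with critical values $-V_{\ft}(z_p)$, and whose subleading corrections assemble into an asymptotic series in $1/n$. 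Next I would apply the multidimensional Laplace/saddle-point method to the lattice sum $a_{\ft,n}(e^{\frac{2\pi i}{n}})=\sum_{k'\in nP_{\ft}\cap\BN^r}\ft_{n,k'}(e^{\frac{2\pi i}{n}})$; after deforming from the real polytope to a steepest-descent contour the dominant contributions come from the critical points solving the Logarithmic Variational Equations \eqref{eq.varlog}, giving an expansion of the form
$$
a_{\ft,n}\!\left(e^{\frac{2\pi i}{n}}\right)\ \sim\ \sum_{z_p\in\hatCP}\lambda_p^{-n}\,n^{\alpha_p}\sum_{m\geq 0}c_{p,m}\,n^{-m},\qquad \lambda_p=e^{-V_{\ft}(z_p)}\in\CV_{\ft},
$$
with $\alpha_p\in\BQ$ and $c_{p,m}\in\BC$. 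Thus the exponential scales $\lambda_p^{-1}$ of the coefficients are exactly the reciprocals of the finite set $\CV_{\ft}$ produced by Theorem \ref{thm.1}.

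From such an expansion the radius of convergence of $\Lnp_{\ft}(z)$ is $\min_p|\lambda_p|$ and the boundary singularities sit at the $\lambda_p\in\CV_{\ft}$. To upgrade this to analytic continuation on all of $\BC\setminus(\CV_{\ft}\cup\{0\})$ I would pass to the Borel transform: the finiteness of the set of exponential scales together with the Gevrey-$1$ character of each series $\sum_m c_{p,m}n^{-m}$ (expected because the saddle-point data are periods of weight $2$) is meant to make the Borel transform a resurgent function whose only singularities in the Borel plane lie over $\{\log\lambda_p\}$; Laplace-transforming back then yields a multivalued continuation singular exactly on $\CV_{\ft}$. I expect this resurgence step to be the main obstacle: passing from an all-orders coefficient expansion to genuine endless analytic continuation requires effective control of the Borel transform---that the expansion is resurgent and not merely Gevrey---and this does not follow formally. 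It is precisely here that the known cases (such as the $4_1$ knot) succeed, because there one has a closed Habiro-type or state-integral formula for $\Lnp_{\ft}(z)$ whose continuation can be read off by hand.

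Finally, for the quasi-unipotency of the local monodromy I would argue from the dilogarithmic nature of the singularities. Because $V_{\ft}$ is built from $\Li_2$ through $\Phi$, the local expansion of $\Lnp_{\ft}(z)$ near each $\lambda_p$ should be a finite combination of powers $(z-\lambda_p)^{\beta}$ with $\beta\in\BQ$, possibly multiplied by integer powers of $\log(z-\lambda_p)$. A finite-order monodromy on the power-law part composed with a unipotent monodromy on the logarithmic part is quasi-unipotent; concretely, the rationality of the exponents, together with the weight-$2$ motivic origin of the critical values $\hat{R}(\hat{\b}_{\ft}(z_p))$, forces the monodromy eigenvalues to be roots of unity, as claimed.
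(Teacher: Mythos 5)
You have set out to prove a statement that the paper itself presents only as a conjecture: the paper offers no proof of Conjecture \ref{conj.1}, and states that it is known only in special cases such as the $4_1$ knot (via \cite{CG}, which treats one-dimensional sums of $q$-factorials). Your proposal is a reasonable heuristic strategy --- and it does track the paper's own motivation, since Lemmas \ref{lem.qfactorials} and \ref{lem.motpot} and the ``Laplace's method'' discussion in the final section are exactly the growth-rate/saddle-point picture you describe --- but it is not a proof, and you partly acknowledge this yourself.

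The gaps are concrete. First, the saddle-point step is not licensed: $a_{\ft,n}(e^{2\pi i/n})$ is a sum over the \emph{lattice points} of $nP_{\ft}$, of terms that oscillate wildly at the root of unity $q=e^{2\pi i/n}$, and one cannot simply ``deform from the real polytope to a steepest-descent contour.'' Turning the discrete sum into a contour integral (via Poisson summation or a Watson--type representation) and controlling the contributions of the boundary of the polytope, of degenerate or non-isolated critical points, and of the Stokes phenomena among the competing exponentials $e^{-nV_{\ft}(z_p)}$ is precisely the unproven analytic content; no such estimate is currently available for general multidimensional \sqterm s. Second, even if one granted the full asymptotic expansion $a_{\ft,n}\sim\sum_p\lambda_p^{-n}n^{\alpha_p}\sum_m c_{p,m}n^{-m}$, this only locates singularities on the circle of convergence $|z|=\min_p|\lambda_p|$; the claimed continuation to all of $\BC\setminus(\CV_{\ft}\cup\{0\})$ requires resurgence of the Borel transform, which, as you concede, ``does not follow formally'' from Gevrey estimates --- so the central claim of the conjecture is assumed rather than derived. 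The quasi-unipotency argument has the same status: the local form ``$(z-\lambda_p)^{\beta}$ times powers of logs with $\beta\in\BQ$'' is an expectation, not a consequence of anything established. What you have written is a correct account of \emph{why one believes} the conjecture and of how the known cases are proved; it should be presented as such, not as a proof.
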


\begin{example}
\lbl{ex.continue3}
If the \sqterm\ is given by \eqref{eq.abe1}, then Conjecture 1
is known; see \cite{CG}. With the notation of \eqref{eq.abe1},
the case of $(a,b,\e)=(-1,2,-1)$ coincides with the Kashaev
invariant of the $4_1$ knot.
\end{example} 

In a sequel to this paper \cite{Ga3}, we 
discuss in detail examples of Conjecture
\ref{conj.1} for \sqterm s that comes from Quantum Topology.

\section{For completeness}
\lbl{sec.completeness}

\subsection{Motivation for the special function $\Phi$ and the potential 
function}
\lbl{sub.motivation}

Recall the special function $\Phi$ given by Equation \eqref{eq.Phi}.
The next lemma is our motivation for introducing $\Phi$. Kashaev informs
us that this computation was well-known to Faddeev, and was a starting
point in the theory of $q$-dilogarithm function; see \cite{FK}.

\begin{lemma}
\lbl{lem.qfactorials}
For every $\a \in (0,1)$ we have:
\begin{equation}
\lbl{eq.qfactorials}
\prod_{k=1}^{[\a N]} (1-e^{\frac{2 \pi i k}{N}}) = e^{N \Phi(e^{2 \pi i \a})+ 
O\left(\frac{\log N}{N} \right)}.
\end{equation}
\end{lemma}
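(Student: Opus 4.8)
The plan is to recognize the left-hand side as a discretized version of the integral defining $\Li_2$, and to pass from the Riemann sum to the integral with explicit control of the error. First I would take the logarithm, writing
\begin{equation*}
\log \prod_{k=1}^{[\a N]} (1-e^{\frac{2 \pi i k}{N}}) = \sum_{k=1}^{[\a N]} \log(1-e^{\frac{2 \pi i k}{N}}),
\end{equation*}
where $\log$ is the principal branch (legitimate since for $1 \le k \le [\a N]$ with $\a \in (0,1)$ the arguments $1-e^{2\pi i k/N}$ stay off the cut). The claim is then equivalent to showing that this sum equals $N \Phi(e^{2\pi i \a}) + O(\log N / N)$, i.e.\ that $\frac{1}{N}\sum_{k=1}^{[\a N]} \log(1-e^{2\pi i k/N})$ approximates $\Phi(e^{2\pi i\a}) = \frac{1}{2\pi i}(\frac{\pi^2}{6} - \Li_2(e^{2\pi i\a}))$ with error $O(\log N / N^2)$ after dividing. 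The natural route is to identify the sum as a Riemann sum for $\int_0^\a \log(1-e^{2\pi i t})\,dt$ and to compute that integral in closed form.

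Second I would evaluate the integral. Substituting $u = e^{2\pi i t}$, so $du = 2\pi i \, u \, dt$, gives
\begin{equation*}
\int_0^\a \log(1-e^{2\pi i t})\,dt = \frac{1}{2\pi i}\int_{1}^{e^{2\pi i\a}} \frac{\log(1-u)}{u}\,du = -\frac{1}{2\pi i}\bigl(\Li_2(e^{2\pi i\a}) - \Li_2(1)\bigr),
\end{equation*}
using the defining integral $\Li_2(z) = -\int_0^z \frac{\log(1-t)}{t}\,dt$ from the excerpt. Since $\Li_2(1) = \pi^2/6$, this is exactly $\frac{1}{2\pi i}(\frac{\pi^2}{6} - \Li_2(e^{2\pi i\a})) = \Phi(e^{2\pi i\a})$, so the target constant is confirmed to be the integral. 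It then remains to prove that the Riemann sum approximates $N$ times the integral with the asserted error.

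Third I would handle the error estimate, which is the main obstacle. For a smooth integrand the Euler--Maclaurin formula gives an $O(1)$ (indeed $O(1/N)$ after the $N$ normalization) error between $\sum_{k=1}^{[\a N]} f(k/N)$ and $N\int_0^\a f(t)\,dt$; the difficulty here is that $f(t) = \log(1-e^{2\pi i t})$ is \emph{not} smooth at the endpoint $t=0$, where it has a logarithmic singularity $\log(1-e^{2\pi i t}) \sim \log(-2\pi i t)$ as $t \to 0^+$. This is precisely the source of the $\frac{\log N}{N}$ term rather than a cleaner $O(1/N)$. The plan is to split off the first few terms near $k=0$: estimate $\sum_{k=1}^{M}$ and the corresponding integral piece $\int_0^{M/N}$ separately, where the integrand behaves like $\log t$, and apply Euler--Maclaurin only on the remaining range $[M/N, \a]$ where $f$ is smooth with derivatives bounded uniformly in $N$. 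A careful bookkeeping of the singular contribution $\sum_{k=1}^{M}\log(k/N)$ against $\int_0^{M/N}\log t\,dt$ — both of which carry $\log N$ factors — shows the leftover is $O(\log N)$ before normalization, hence $O(\log N / N)$ in the exponent as claimed. I expect the delicate part to be getting the singular endpoint bookkeeping to cancel at the right order; the endpoint $t=\a$ is harmless since $e^{2\pi i\a}\neq 1$ there. An alternative that avoids Euler--Maclaurin is to recognize $\prod_{k=1}^{[\a N]}(1-e^{2\pi ik/N})$ in terms of the quantum dilogarithm / Faddeev's function and to cite its known asymptotics, consistent with the remark in the excerpt attributing this to Faddeev; see \cite{FK}.
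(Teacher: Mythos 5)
Your proposal follows essentially the same route as the paper's proof: take logarithms, compare the resulting sum to $N\int_0^\a \log(1-e^{2\pi i t})\,dt$ via Euler--MacLaurin, and evaluate that integral in closed form as $\Phi(e^{2\pi i \a})$ (the paper writes it equivalently as $\a N\int_0^1\log(1-e^{2\pi i\a x})\,dx$). Your third step is in fact more careful than the paper, which simply invokes Euler--MacLaurin without addressing the logarithmic singularity of the integrand at $t=0$ that you correctly identify as the source of the $\log N$ factor in the error term.
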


\begin{proof}
The proof is similar to the proof of \cite[Prop.8.2]{GL2}, and follows 
from applying the {\em Euler-MacLaurin summation formula} 
\begin{eqnarray*}
\log(\prod_{k=1}^{[\a N]} (1-e^{\frac{2 \pi i k}{N}})) & = &
\sum_{k=1}^{[\a N]} \log(1-e^{\frac{2 \pi i k}{N}}) \\
&= & \a N \int_0^1 \log(1-e^{2 \pi i \a x}) dx + O\left(\frac{\log N}{N} \right),
\end{eqnarray*} 
together with the fact that:
\begin{eqnarray*}
\int_0^1 \log(1-e^{2 \pi i ax}) dx =
\frac{1}{2 \pi i \a} \left(\frac{\pi^2}{6}
- \Li_2(e^{2 \pi i \a}) \right).
\end{eqnarray*}
\end{proof}

Fix a \sqterm\ $\ft_{k}(q)$ where $k=(k_1,\dots,k_r)$ and a positive
natural number $N \in \BN$. Fix also $w=(w_0,\dots,w_r)$ where $w_i \in (0,1)$
for $i=0,\dots,r$. Let us abbreviate $([w_0 N], [w_1 N], \dots, [w_r N])$ by 
$[w N]$. Lemma \ref{lem.qfactorials} implies the following.

\begin{lemma}
\lbl{lem.motpot}
With the above assumptions, we have:
\begin{equation}
\lbl{eq.motpot}
\log \ft_{[w N]} =  e^{N V_{\ft}(e^{2 \pi i w})+
O\left(\frac{\log N}{N} \right)}.
\end{equation}
\end{lemma}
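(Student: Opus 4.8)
The plan is to derive \eqref{eq.motpot} by taking logarithms of the product expression \eqref{eq.sqterm} for $\ft_{[wN]}(q)$ at $q = e^{2\pi i/N}$ and evaluating each factor asymptotically using Lemma \ref{lem.qfactorials}. First I would write $\log \ft_{[wN]}$ as a sum of four kinds of contributions coming from the four structural pieces of a \sqterm: the quadratic monomial $q^{Q(k)}$, the sign $\e^{L(k)}$, and the $q$-factorial ratios assembled into the $q$-binomials $\qbinom{B_j(k)}{C_j(k)}$ and the quotients $(q)_{D_j(k)}/(q)_{E_j(k)}$. Setting $q = e^{2\pi i/N}$ and $k = [wN]$, each linear form $A(k)$ evaluated at $[wN]$ is $N A(0,w) + O(1)$, so every $q$-factorial $(q)_{A(k)}$ is a product of the shape appearing on the left side of \eqref{eq.qfactorials} with $\a = A(0,w)$, up to the lower-order boundary terms absorbed into the $O(\log N/N)$ error after dividing by $N$.

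The key computation is then to match each asymptotic piece against a summand of the potential $V_{\ft}$ from \eqref{eq.potential}. The quadratic factor contributes $\frac{2\pi i}{N} Q([wN]) = \frac{1}{N}\cdot\frac{1}{2\pi i}(2\pi i)^2 Q(w) + O(1) \sim N\cdot\frac{1}{2\pi i}Q(\Log(e^{2\pi i w}))$, matching the first term of $V_{\ft}$ since $\Log(e^{2\pi i w_i}) = 2\pi i w_i$ for $w_i \in (0,1)$. The sign factor $\e^{L([wN])}$ contributes $\Log\e\cdot L([wN]) \sim N\cdot\frac{1}{2\pi i}\Log\e\cdot\Log(z^L)$ with $z = e^{2\pi i w}$, matching the second term. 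For each $q$-binomial and each factorial quotient, I would expand the logarithm using Lemma \ref{lem.qfactorials} applied to the numerator and denominator $q$-factorials separately, so that $\frac{1}{N}\log\qbinom{B_j(k)}{C_j(k)} \to \Phi(z^{B_j}) - \Phi(z^{C_j}) - \Phi(z^{B_j - C_j})$ and similarly $\frac{1}{N}\log\frac{(q)_{D_j(k)}}{(q)_{E_j(k)}} \to \Phi(z^{D_j}) - \Phi(z^{E_j})$; grouping these into the single exponents $\e_j$ and the combined linear form $A_j$ of the associated \qterm\ reproduces the sum $\sum_j \e_j \Phi(z^{A_j})$, which is the third term of $V_{\ft}$. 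Summing the four matched contributions and collecting the errors gives $\log\ft_{[wN]} = N V_{\ft}(e^{2\pi i w}) + O(\log N/N)$, as claimed. (I note in passing that the right-hand side of \eqref{eq.motpot} as printed has a spurious exponential, and the intended statement equates $\log\ft_{[wN]}$ with $N V_{\ft}(\cdots) + O(\log N/N)$; my plan proves this corrected form.)

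The main obstacle is bookkeeping the boundary and integer-part corrections so that they genuinely land in the uniform $O(\log N/N)$ error rather than shifting the leading coefficient. Two issues require care. First, Lemma \ref{lem.qfactorials} is stated for a fixed $\a \in (0,1)$, whereas here the relevant fractions $A_j(0,w)$ must be shown to lie strictly inside $(0,1)$ for $w$ in the interior of the Newton polytope $P_{\ft}$; this is exactly what the defining inequalities $B_j(1,w) \geq C_j(1,w) \geq 0$ and $D_j(1,w) \geq E_j(1,w) \geq 0$ of \eqref{eq.Pft}, together with the compactness and nonemptiness of $P_{\ft}$, are designed to guarantee, so I would invoke that hypothesis to control the range of each $\a$. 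Second, the replacements $A([wN]) = N A(0,w) + O(1)$ and the Euler--MacLaurin estimate must be uniform across the finitely many linear forms $A_j, B_j, C_j, D_j, E_j, L$ and the quadratic $Q$; since there are only finitely many such forms and each error is of the stated order, the total error remains $O(\log N/N)$, but verifying this uniformity — and handling any form that degenerates to value $0$ or $1$ on the boundary — is where the routine-looking argument actually needs attention.
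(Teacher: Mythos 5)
Your proposal is correct and matches the paper's intent exactly: the paper offers no written proof beyond the remark that Lemma \ref{lem.qfactorials} implies the result, and your factor-by-factor application of that lemma to the quadratic monomial, the sign, and the $q$-factorials making up the $q$-binomials and quotients is precisely the argument being gestured at. You are also right that \eqref{eq.motpot} contains a typo (the $\log$ on the left and the exponential on the right cannot both be there), and your corrected reading $\log\ft_{[wN]} = N V_{\ft}(e^{2\pi i w}) + O(\cdot)$ is the intended statement.
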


This motivates our definition of the potential function.

\subsection{Laplace's method for a \qterm}
\lbl{sub.laplace}

There is an alternative way to derive the Variational Equations 
\eqref{eq.var} from a \qterm\ $\ft$.

Since $\ft_{k}$ is $q$-hypergeometric, and $k=(k_1,\dots,k_r)$, 
it follows that for every $i=0,\dots,r$ we have

\begin{equation*}
R_i(z_0,\dots,z_r,q):=\frac{\ft_{n,k_1,\dots,k_i+1,\dots,k_r}(q)}{
\ft_{n,k_1,\dots,k_r}(q)} \in \BQ(z_0,\dots,z_r,q)
\end{equation*}
where $z_i=q^{k_i}$ for $i=1,\dots,r$ and $z_0=q^n$. It is easy to see that
the system of equations:

\begin{eqnarray*}
R_i(z_0,z_1,\dots,z_r)=1, \qquad i=0,\dots,r.
\end{eqnarray*}
is identical to the system \eqref{eq.var} of variational equations.
In discrete math, the above system is known as {\em Laplace's method}.

\subsection{A comparison between the Bloch group and its extended version}
\lbl{sub.compareBS}

A comparison between the extended Bloch-Suslin complex and the
Suslin complex is summarized in the following diagram with short exact
rows and columns. The diagram is taken by combining \cite[Sec.3]{GZ} with
\cite[Thm.7.7]{Ne1}, and using the map $\hat{\chi}$ from \cite[Eqn.3.11]{GZ}.

$$
\divide\dgARROWLENGTH by2
\begin{diagram}
\node[2]{0}
\arrow{s}
\node{0}
\arrow{s}
\node{0}
\arrow{s}
\\
\node{0}
\arrow{e}
\node{\BQ/\BZ}
\arrow{e,t}{e^{2 \pi i \bullet}}
\arrow{s,r}{\hat{\psi}}
\node{\BC^*}
\arrow{e}
\arrow{s,r}{\hat{\chi}}
\node{\BC/(\BQ/\BZ)}
\arrow{s,r}{\tau} 
\arrow{e}
\node{0}
\arrow{s}
\\  
\node{0}
\arrow{e}
\node{\hatBC}
\arrow{e}
\arrow{s}
\node{\widehat{\calP(\BC)}}
\arrow{e,t}{\hat{\nu}}
\arrow{s}
\node{\BC \wedge \BC}
\arrow{s,r}{-\exp \wedge \exp} 
\arrow{e}
\node{K_2(\BC)}
\arrow{s}
\arrow{e}
\node{0}
\\
\node{0}
\arrow{e}
\node{\calB(\BC)}
\arrow{e}
\arrow{s}
\node{\calP(\BC)}
\arrow{e,t}{\nu}
\arrow{s}
\node{\BC^* \wedge \BC^*}
\arrow{e}
\arrow{s}
\node{K_2(\BC)}
\arrow{e}
\arrow{s}
\node{0}
\\
\node[2]{0}
\node{0}
\node{0}
\node{0}
\end{diagram}
$$
where

\begin{eqnarray*}
G \wedge G &=& G \otimes_{\BZ} G/(a \otimes b+b \otimes a) \\
\hat\psi(z) &=& \hat\chi(e^{2 \pi i z}) \\
\tau(z) &=& z \wedge 2 \pi i \\
(-\exp \wedge \exp)(a \wedge b) &=& -(e^{2 \pi i a} \wedge e^{2 \pi i b}).
\end{eqnarray*}
In addition, we have the following useful Corollary, from \cite[3.14]{GZ}.

\begin{corollary}
\lbl{cor.Ne}
For $z \in \BC$ we have:
$$
\frac{1}{(2 \pi i)^2} \hat{R}(\hat{\chi}(e^{2 \pi i z}))= z
$$
The restriction
\begin{equation*}
\hat{R}: \Ker(\hatBC \to \calB(\BC)) \longto \BC/\BZ(2)
\end{equation*}
is 1-1.
\end{corollary}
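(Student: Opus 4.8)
The statement has two parts: the first identity is a direct computation with the Rogers dilogarithm, and the injectivity of the restriction is a formal consequence of it together with the exactness of \eqref{eq.Ne}. The plan is to settle part (a) first and then deduce part (b) in one line.

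For the first identity, I would insert the explicit formula for $\hat\chi$ from \cite[Eqn.3.11]{GZ} into the definition \eqref{eq.hatrogers} of $\hat{R}$ and simplify. Writing $w=e^{2\pi i z}$, that formula presents $\hat\chi(w)$ as a difference of generators $[x;p,q]$ sharing a common cross-ratio $x$ (a fixed function of $w$) and differing only in the even branch integers $p,q$. The point is that the genuinely transcendental contributions, the $\Li_2(x)$ terms and the constants $-\pi^2/6$, then cancel identically between the two generators, exactly as in the evaluation of $\hat{R}([z^{-L/2};0,q]-[z^{-L/2};0,0])$ carried out in the proof of Theorem \ref{thm.1}(b). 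What survives is a single product of a logarithm with a multiple of $\pi i$, i.e. a constant multiple of $\Log(w)$. Using $\Log(w)\equiv 2\pi i z \pmod{2\pi i\BZ}$, this remaining term equals $(2\pi i)^2 z$ modulo $\BZ(2)=(2\pi i)^2\BZ$, which gives $\frac{1}{(2\pi i)^2}\hat{R}(\hat\chi(e^{2\pi i z}))=z$ in $\BC/\BZ$. Since $w$ ranges over all of $\BC^*$, this simultaneously identifies $\frac{1}{(2\pi i)^2}\hat{R}\circ\hat\chi\circ e^{2\pi i\bullet}$ with the identity of $\BC/\BZ$.

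The hard part is the branch bookkeeping, and it is essentially the whole content of part (a): $\hat{R}$ is only well defined modulo $\BZ(2)$, and the branch ambiguity of the principal logarithm, $\Log(e^{2\pi i z})=2\pi i z + 2\pi i\BZ$, must contribute exactly the $(2\pi i)^2\BZ$ that makes the identity hold in $\BC/\BZ$ rather than in $\BC$. Concretely, I must verify that the specific even integers selected by the formula for $\hat\chi$ yield coefficient $(2\pi i)^2$ on the linear-in-$z$ term, with every correction term landing in $\BZ(2)$; this is where care is needed and where a wrong choice of branch would spoil the normalization.

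Granting part (a), part (b) is immediate. By the exactness of \eqref{eq.Ne}, the map $\hat\psi\colon\BQ/\BZ\to\hatBC$ is injective and $\Ker(\hatBC\to\calB(\BC))$ is precisely its image $\hat\psi(\BQ/\BZ)$. Since $\hat{R}$ is a homomorphism on $\hatBC$, it suffices to show that the only element of this kernel annihilated by $\hat{R}$ is $0$. So let $\xi$ lie in the kernel with $\hat{R}(\xi)=0$ in $\BC/\BZ(2)$, and write $\xi=\hat\psi(z)$ with $z\in\BQ/\BZ$ uniquely determined. Because $\hat\psi(z)=\hat\chi(e^{2\pi i z})$, part (a) gives $z=\frac{1}{(2\pi i)^2}\hat{R}(\xi)$ in $\BC/\BZ$; as $\hat{R}(\xi)=0$ this forces $z=0$ in $\BC/\BZ$, and since $z\in\BQ/\BZ$ we conclude $z=0$, hence $\xi=\hat\psi(0)=0$. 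Therefore the restriction of $\hat{R}$ to $\Ker(\hatBC\to\calB(\BC))$ is injective, as claimed.
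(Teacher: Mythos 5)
You should first be aware that the paper does not prove this corollary: it is imported with the phrase ``from \cite[3.14]{GZ}'', and the formula \cite[Eqn.3.11]{GZ} for $\hat\chi$ on which everything rests is never reproduced in the text. So there is no in-paper argument to compare against, and your proposal has to stand on its own. Your deduction of the injectivity statement from the evaluation formula is correct and complete: by exactness of \eqref{eq.Ne} the kernel of $\hatBC\to\calB(\BC)$ is $\hat\psi(\BQ/\BZ)$ with $\hat\psi$ injective, the legend of the diagram gives $\hat\psi(z)=\hat\chi(e^{2\pi i z})$, and if $\hat{R}(\hat\psi(z))=0$ in $\BC/\BZ(2)$ then the first identity forces $z=0$ in $\BC/\BZ$, hence in $\BQ/\BZ$. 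You are also right to read the first identity in $\BC/\BZ$ rather than $\BC$; the displayed statement is slightly abusive there, since $\hat{R}$ is only defined modulo $\BZ(2)$.

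The gap is in the first assertion, and you have in effect named it yourself. The cancellation mechanism you describe is the right one: $\hat\chi(w)$ is a difference of two lifts of a common point, so the $\Li_2(x)$ and $-\pi^2/6$ terms drop out of $\hat{R}$ and only the correction $\frac{\pi i}{2}(q\Log x+p\Log(1-x))-\frac{\pi^2 pq}{2}$ survives --- exactly parallel to the evaluation of $\hat{R}([z^{-L/2};0,2\Log\e/(\pi i)]-[z^{-L/2};0,0])$ in the proof of Theorem \ref{thm.1}(b). But what survives from a difference $[x;p,q]-[x;p,q']$ is $\frac{\pi i}{2}(q-q')(\Log x+p\pi i)$, which is \emph{not} automatically a multiple of $\Log(w)$: whether it equals $2\pi i\Log(w)\equiv(2\pi i)^2z \pmod{\BZ(2)}$ depends entirely on which $x$, $p$, $q$ the formula \cite[Eqn.3.11]{GZ} actually assigns to $w$, and you never exhibit them. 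Since the normalization constant $(2\pi i)^2$ and the fact that the branch ambiguity of $\Log(e^{2\pi i z})$ lands in $\BZ(2)$ are the entire content of the identity, your argument for the first assertion is a plan for a computation rather than the computation itself; to close the gap you would need to quote the explicit definition of $\hat\chi$ from \cite{GZ} and carry the evaluation through.
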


\ifx\undefined\bysame
        \newcommand{\bysame}{\leavevmode\hbox
to3em{\hrulefill}\,}
\fi


\begin{thebibliography}{[EMSS]}

\bibitem[AL]{AL} M. Abouzahra and L. Lewin, 
        {\em The polylogarithm in algebraic number fields},  
        J. Number Theory  {\bf 21}  (1985) 214--244.

\bibitem[B-N]{B-N} D. Bar-Natan,
        {\em Knot Atlas},
        {\tt http://katlas.math.toronto.edu}

\bibitem[BD]{BD} A. Beilinson and P. Deligne,
        {\em Motivic interpretation of the Zagier conjecture connecting 
        polylogarithms and regulators},
        in Motives, Proc. Symp. Pure Math. AMS {\bf 55} (1984) 97--121.

\bibitem[B1]{B1} S. Bloch,
        {\em Higher regulators, algebraic $K$-theory, and zeta functions 
        of elliptic curves}, printed version of the Irvine 1978 lectures. 
        CRM Monograph Series, {\bf 11} AMS, 2000.

\bibitem[B2]{B2} \bysame,
        {\em Algebraic cycles and higher $K$-theory},  
        Adv. in Math.  {\bf 61}  (1986) 267--304.

\bibitem[CG]{CG} O. Costin and S. Garoufalidis,
        {\em Resurgence of 1-dimensional sums of $q$-factorials},
        preprint 2007.

\bibitem[DS]{DS} J.L. Dupont and C.H. Sah,
        {\em Scissors congruences II},
         J. Pure Appl. Algebra  {\bf 25}  (1982) 159--195.

\bibitem[DZ]{DZ} \bysame and C. Zickert, 
        {\em A dilogarithmic formula for the Cheeger-Chern-Simons class},
        Geom. Topol. {\bf 10} (2006) 1347--1372. 

\bibitem[E-V]{E-V} P. Elbaz-Vincent, 
        {\em A short introduction to higher Chow groups},
        in  Transcendental aspects of algebraic cycles,
        London Math. Soc. Lecture Note Ser., {\bf 313} (2004) 171--196. 

\bibitem[FK]{FK} L.D. Faddeev and R.M. Kashaev,
        {\em Quantum dilogarithm},  
        Modern Phys. Lett. A  {\bf 9}  (1994) 427--434.

\bibitem[GL1]{GL1} S. Garoufalidis and T.T.Q. Le,
        {\em The colored Jones function is $q$-holonomic},
        Geom. and Topology {\bf 9} (2005) 1253--1293.

\bibitem[GL2]{GL2} \bysame and T.T.Q. Le,
        {\em Asymptotics of the colored Jones function of a knot},
        preprint 2005 {\tt math.GT/0508100}.

\bibitem[GL3]{GL3} \bysame and \bysame,
        {\em Gevrey series in quantum topology},
        J. Reine Angew. Math., {\bf 618}  (2008) 169--195.

\bibitem[Ga1]{Ga1} \bysame,
        {\em An extended version of additive $K$-theory},
        J. K-Theory  {\bf 4}  (2009) 391--403.

\bibitem[Ga2]{Ga2} \bysame,
        {\em An ansatz for the singularities of hypergeometric multisums},
         Adv. in Appl. Math.  {\bf 41}  (2008) 423--451. 

\bibitem[Ga3]{Ga3} \bysame,
        {\em Chern-Simons theory, analytic continuation and arithmetic},
        Acta Math. Vietnam.  {\bf 33}  (2008) 335--362.

\bibitem[GZ]{GZ} S. Goette and C. Zickert,
        {\em The Extended Bloch Group and the Cheeger-Chern-Simons Class},
         Geom. Topol.  {\bf 11}  (2007) 1623--1635. 

\bibitem[Go1]{Go1} A. Goncharov,
        {\em Polylogarithms and motivic Galois groups}, 
        in Motives, Proc. Symp. Pure Math. AMS {\bf 55} (1984) 43--96.

\bibitem[Go2]{Go2} \bysame,
        {\em Geometry of configurations, polylogarithms and motivic 
        cohomology},
        Adv. in Math. {\bf 114} (1995) 197--318.

\bibitem[Ks]{Ka} R. Kashaev,
        {\em The hyperbolic volume of knots from the quantum dilogarithm},
        Modern Phys. Lett. A {\bf 39} (1997) 269--275.

\bibitem[KLM-S]{KLM-S} M. Kerr, J.D. Lewis and S. M\"uller-Stach,
        {\em The Abel-Jacobi map for higher Chow groups},
        Compos. Math.  {\bf 142}  (2006) 374--396.
 
\bibitem[Ko]{Ko} M. Kontsevich,
        problem proposed in Aarhus, 2006. {\tt http://www.ctqm.au.dk/PL}

\bibitem[KZ]{KZ} \bysame and D. Zagier,
        {\em Periods}, in 
        Mathematics unlimited---2001 and beyond, (2001) 771--808.

\bibitem[Le]{Le} L. Lewin, 
        {\em The inner structure of the dilogarithm in algebraic fields},
        J. Number Theory {\bf 19} (1984) 345--373. 

\bibitem[Na]{Na} W. Nahm,
        {\em Conformal field theory and torsion elements of the Bloch group},
        in Frontiers in Number Theory, Physics and Geometry II, 67--132.

\bibitem[NZ]{NZ} W.D. Neumann and D. Zagier,
        {\em Volumes of hyperbolic three-manifolds},  
        Topology  {\bf 24}  (1985) 307--332.

\bibitem[Ne1]{Ne1} \bysame,
        {\em Extended Bloch group and the Cheeger-Chern-Simons class},  
        Geom. Topol. {\bf 8} (2004) 413--474. 

\bibitem[Ne2]{Ne2} \bysame,
        {\em Hilbert's 3rd problem and invariants of $3$-manifolds},  
        The Epstein birthday schrift,  
        Geom. Topol. Monogr., {\bf 1} (1998) 383--411.

\bibitem[Oe]{Oe} J. Oesterl\'e, 
        {\em Polylogarithmes}, 
        S\'eminaire Bourbaki, Vol. 1992/93.
        Ast\'erisque No. 216 (1993), Exp. No. 762 49--67.


\bibitem[St]{St} R.P. Stanley,
        {\em Enumerative Combinatorics, Volume 1}, 
        Cambridge University Press (1997).

\bibitem[Su1]{Su1} A.A. Suslin, 
        {\em $K_3$ of a field, and the Bloch group},
        Translated in Proc. Steklov Inst. Math. {\bf 4} (1991) 217--239. 

\bibitem[Su2]{Su2} \bysame,
        {\em Algebraic $K$-theory of fields},  
        Proceedings of the International Congress of Mathematicians, 
        Vol. {\bf 1, 2}, Berkeley, (1986)  222--244.

\bibitem[Su3]{Su3} \bysame,
        {\em On the $K$-theory of local fields},
        Journal of Pure and Applied Alg. {\bf 34} (1984) 301--318. 

\bibitem[Th]{Th} W. Thurston, {\em The geometry and topology of 3-manifolds},
        1979 notes, available from MSRI.

\bibitem[Tu]{Tu}
        V.~G. Turaev, {\it Quantum invariants of knots and 3-manifolds}, 
        de Gruyter Studies in Mathematics {\bf 18} Walter de Gruyter, 1994.

\bibitem[WZ]{WZ} H. Wilf and D. Zeilberger,
        {\em An algorithmic proof theory for hypergeometric (ordinary and
        $q$) multisum/integral identities},
        Inventiones Math. {\bf 108} (1992)  575--633.

\bibitem[Za1]{Za1} D. Zagier,
        {\em Polylogarithms, Dedekind zeta functiona and the algebraic 
        $K$-theory of fields}, 
        in Arithmetic Algebraic Geometry, Progr. Math. {\bf 89} (1991) 
        391--430.

\bibitem[Za2]{Za2} \bysame,
        {\em Zagier, Don The dilogarithm function},  
        Frontiers in number theory, physics, and geometry. II,  
        Springer (2007) 3--65.

\bibitem[Z]{Z} D. Zeilberger,
        {\em A holonomic systems approach to special functions identities},
        J. Comput. Appl. Math. {\bf 32} (1990) 321--368.

\bibitem[Zi]{Zi} C. Zickert,
        {\em The extended Bloch group and algebraic K-theory},
        preprint 2009 {\tt arXiv:0910.4005}. 

\end{thebibliography}
\end{document}